\documentclass[11pt]{article}
\usepackage{amsthm, amsmath, amssymb, amsfonts, url, booktabs, tikz, setspace, fancyhdr, bm}

\usepackage{amsthm}

\usepackage{mathrsfs}
\usepackage{xcolor}
\usepackage[margin = 2.3cm]{geometry}
\usepackage{hyperref, enumerate}
\usepackage[shortlabels]{enumitem}
\usepackage[babel]{microtype}
\usepackage[english]{babel}
\usepackage[capitalise]{cleveref}
\usepackage{comment}
\usepackage{bbm}
\usepackage{csquotes}
\usepackage{mathabx}
\usepackage{tikz}
\usetikzlibrary{arrows.meta,decorations.pathreplacing,positioning}
\usepackage{graphicx}
\usepackage{float}
\usepackage{thm-restate}
\usepackage{mathtools}

\counterwithin{figure}{section}
\newcounter{shared}
\newtheorem{conjecture}[shared]{Conjecture}

\newtheorem{theorem}{Theorem}[section]
\newtheorem*{thm-non}{Theorem}
\newtheorem{prop}[theorem]{Proposition}
\newtheorem{lemma}[theorem]{Lemma}
\newtheorem{cor}[theorem]{Corollary}
\newtheorem{claim}[theorem]{Claim}

\theoremstyle{definition}

\newtheorem*{defn-non}{Definition}

\newtheorem{rmk}[theorem]{Remark}

\newlist{Case}{enumerate}{2}
\setlist[Case, 1]{%
    label           =   {\bfseries Case \arabic*.},
    labelindent=1em ,labelwidth=1.3cm, labelsep*=1em, leftmargin =!
}
\setlist[Case, 2]{%
    label           =   {\bfseries Subcase \arabic{Casei}.\arabic*.},
    labelindent=-1em ,labelwidth=1.3cm, labelsep*=1em, leftmargin =!
}

\newenvironment{poc}{\begin{proof}[Proof of the claim]}{\end{proof}}

\newcommand{\floor}[1]{\lfloor #1\rfloor}

\usepackage{todonotes}

\newcommand{\eps}{\varepsilon}

\newcommand{\chithr}{\delta_{\chi}}
\newcommand{\homthr}{\delta_{\textup{hom}}}
\newcommand{\dist}{\operatorname{dist}}
\newcommand{\hto}{\xrightarrow{\textup{hom}}}
\newcommand{\nhto}{\mathrel{\overset{\textup{hom}}{\nrightarrow}}}

\title{The homomorphism threshold of odd cycle \(C_{2k-1}\) is below \(\frac{1}{2k-1}\)}
\author{
Jian Wang\thanks{School of Mathematics, Sichuan University,
Chengdu, China. Email: wangjianmath01@scu.edu.cn. Jian Wang is supported by
National Natural Science Foundation of China Grant no. 12471316 and Natural Science Foundation of Shanxi Province Grant no. RD2500002993.}
\and
Shipeng Wang\thanks{School of Mathematical Sciences, Jiangsu University, Zhenjiang, China. Email: 
spwang22@ujs.edu.cn. Shipeng Wang is supported by
National Natural Science Foundation of China Grant no. 12001242.}
\and
Zixiang Xu\thanks{School of Mathematical Sciences, Zhejiang University, Hangzhou, China. Email: zixiangxu@zju.edu.cn.}}
\date{}

\begin{document}
\maketitle

\begin{abstract}
The homomorphism threshold \(\delta_{\textup{hom}}(H)\) of a graph \(H\) asks how large the minimum degree of an \(H\)-free graph has to be in order to force a homomorphism to a bounded \(H\)-free graph. Determining this threshold is in general very difficult, and the odd cycles \(C_{2k-1}\) are among the most important open cases for \(k\ge3\). Ebsen and Schacht proved the general upper bound \(\delta_{\textup{hom}}(C_{2k-1})\le\frac{1}{2k-1}\), while a breakthrough of Sankar, using topological methods and a graph-theoretic analogue of homotopy equivalence, gave the first positive lower bound. The value \(\frac{1}{2k-1}\) appeared particularly compelling: Ebsen and Schacht obtained the same exact threshold when all odd cycles of length at most \(2k-1\) are forbidden, Huang, Liu, Rong and Xu later proved that it is the exact blowup threshold of \(C_{2k-1}\), and Letzter and Snyder also explicitly asked whether \(\delta_{\textup{hom}}(C_{5})=\frac{1}{5}\). Surprisingly, we show that the upper bound can be improved. More precisely, for every integer \(k\ge3\), we prove
\[
\frac{1}{2\left((k-1)^{4k-5}(2k-1)+\frac{(k-1)^{4k-5}-1}{k-2}\right)}
\le
\delta_{\textup{hom}}(C_{2k-1})
\le
\frac{4(k-1)}{4(k-1)(2k-1)+1}
<
\frac{1}{2k-1}.
\]
The new lower bound comes from a new graph-theoretic construction based on a sparse homomorphism theorem of Ne\v{s}et\v{r}il and Zhu, and it improves Sankar's quantitative bound. The improved upper bound follows from a new structural argument that controls common neighborhoods along short odd paths. Our results have various consequences, in particular, every odd cycle of length at least five has pairwise distinct chromatic, homomorphism, polynomial removal, and linear removal thresholds, resolving two conjectures of Fox and Wigderson.
\end{abstract}

\section{Introduction}
An important way to measure how much minimum degree forces structure in \(H\)-free graphs is the chromatic threshold. 
An old result of Andr\'{a}sfai, Erd\H{o}s, and S\'{o}s~\cite{1974ErdosSos} showed that every $K_{s}$-free graph \(G\) with minimum degree larger than $\frac{3s-7}{3s-4}\cdot |V(G)|$ has chromatic number at most $s-1$, which further leads to the following general problem proposed by Erd\H{o}s and Simonovits~\cite{1973ErdosS}. For a given graph \(H\), define its chromatic threshold \(\chithr(H)\) to be the infimum of all \(\alpha\) such that every \(H\)-free graph \(G\) with \(\delta(G)\ge\alpha |V(G)|\) has bounded chromatic number. This problem was widely studied~\cite{2011JGT,2010ColoringViaVCDim,2010arxivKrfree,2002Thomassen,2007CombC5}, and was solved for all graphs by Allen, B\"ottcher, Griffiths, Kohayakawa and Morris~\cite{2013Adv}. We also refer the readers to some recent variants~\cite{GLWX2026,KLSWWX2025,NWX2026}.

Since having bounded chromatic number is equivalent to admitting a homomorphism to a clique of bounded order, Thomassen~\cite{2002Thomassen} asked for a stronger version in which the bounded image is also required to avoid \(H\). We write \(G\hto F\) if there is a homomorphism from \(G\) to \(F\). The homomorphism threshold \(\homthr(H)\) is the infimum of all \(\alpha\) such that for every \(\eps>0\) there is a constant \(b\) with the following property: every sufficiently large \(H\)-free graph \(G\) with \(\delta(G)\ge(\alpha+\eps)|V(G)|\) maps to an \(H\)-free graph on at most \(b\) vertices. Clearly \(\homthr(H)\ge\chithr(H)\). Equality is known for cliques: \L{}uczak~\cite{2006CombTriangle} proved the triangle case, and Goddard and Lyle~\cite{2011JGT} proved the general clique case, also see some new proofs and discussions in~\cite{2024GraphToGeom,2020CPCProb}. 

Odd cycles are the main case where this stronger question already shows a real difference. Answering an old question in~\cite{1973ErdosS}, Thomassen proved \(\chithr(C_{2k-1})=0\) for every \(k\ge3\)~\cite{2007CombC5}, so arbitrarily small positive minimum degree is enough to force bounded chromatic number in \(C_{2k-1}\)-free graphs. The homomorphism problem is harder because the bounded target must still be \(C_{2k-1}\)-free. Letzter and Snyder~\cite{2019JGTC3C5} showed that \(\homthr(C_{5})\le\frac{1}{5}\), and later Ebsen and Schacht~\cite{2020CombOdd} proved the general upper bound
\(
\homthr(C_{2k-1})\le\frac{1}{2k-1}
\)
for every \(k\ge3\).

For a family \(\mathcal{F}\) of graphs, one can define \(\homthr(\mathcal{F})\) analogously, requiring both the input graph and the bounded target to be \(\mathcal{F}\)-free. There were strong reasons to regard \(\frac{1}{2k-1}\) as the natural candidate for the exact value. Let
\(
\mathcal{C}_{2k-1}=\{C_{3},C_{5},\ldots,C_{2k-1}\}.
\)
In the same paper, Ebsen and Schacht~\cite{2020CombOdd} proved the exact equality
\(
\homthr(\mathcal{C}_{2k-1})=\frac{1}{2k-1}.
\)
More recently, Huang, Liu, Rong and Xu~\cite{2025Blowup} introduced the blowup threshold \(\delta_{\mathrm{B}}(H)\), which asks when every maximal \(H\)-free graph is a blowup of a bounded graph, and proved
\(
\delta_{\mathrm{B}}(C_{2k-1})=\frac{1}{2k-1}.
\)
This is a stronger structural requirement than merely having a bounded \(C_{2k-1}\)-free homomorphic image. Thus the same constant was sharp for the family of all odd cycles of length at most \(2k-1\) and for the stronger blowup problem, while it was also the best known upper bound for a single odd cycle. Taken together, these results made the equality \(\homthr(C_{2k-1})=\frac{1}{2k-1}\) look especially plausible. We also refer the interested readers to more related structural results on graphs without odd cycles of certain length~\cite{BFMCPS2023,GLWX2026,GHW2025,
Häggkvist1982,KLSWWX2025,
lu2026structure,MS2015,
NWX2026,
RWWY2024,YanPengYuan2024,yuan2024minimum}.

The first lower bound separating the chromatic and homomorphism thresholds is due to Sankar~\cite{2022sankar}, who proved \(\homthr(C_{2k-1})>0\) for every \(k\ge3\), using topological methods and a graph-theoretic analogue of homotopy equivalence. This was a breakthrough step because it showed that the restriction on the homomorphic image is not a small technical change. It nevertheless left an enormous quantitative gap to the candidate value \(\frac{1}{2k-1}\).

Much to our surprise, the natural candidate \(\frac{1}{2k-1}\) is not the correct upper bound. Our main result simultaneously gives a new graph-theoretic lower bound construction and a strict improvement on the Ebsen--Schacht upper bound~\cite{2020CombOdd}. 
\begin{theorem}\label{thm:main}
For every integer \(k\ge3\),
\[
\frac{1}{2\left((k-1)^{4k-5}(2k-1)+\frac{(k-1)^{4k-5}-1}{k-2}\right)}
\le
\homthr(C_{2k-1})\le
\frac{4(k-1)}{4(k-1)(2k-1)+1}
<
\frac{1}{2k-1}.
\]
Moreover, for every \(\eps>0\), every sufficiently large \(C_{2k-1}\)-free graph \(G\) satisfying
\[
\delta(G)\ge\left(\frac{4(k-1)}{4(k-1)(2k-1)+1}+\eps\right)|V(G)|
\]
admits a homomorphism to \(K_{2k-2}\).
\end{theorem}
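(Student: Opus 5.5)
The theorem has three parts: the lower bound, the upper bound with the stronger conclusion \(G\hto K_{2k-2}\), and the elementary inequality \(\frac{4(k-1)}{4(k-1)(2k-1)+1}<\frac{1}{2k-1}\). The inequality is a one-line cross-multiplication: \(4(k-1)(2k-1)<4(k-1)(2k-1)+1\). Also, \(K_{2k-2}\) is \(C_{2k-1}\)-free, since it has only \(2k-2\) vertices. So the upper bound on \(\homthr\) follows from the "moreover" statement, and there are two real tasks.

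\textbf{Upper bound.} I would follow the Ebsen--Schacht framework and sharpen its key step. Set \(\alpha=\frac{4(k-1)}{4(k-1)(2k-1)+1}\) and let \(G\) be \(C_{2k-1}\)-free with \(\delta(G)\ge(\alpha+\eps)n\).
\begin{enumerate}[(i)]
\item By the removal lemma and regularity, \(G\) has only \(o(n^{2k-1})\) copies of each shorter odd cycle \(C_{2j-1}\), \(j<k\). Short odd cycles therefore live in a sparse part of \(G\) that can be controlled.
\item For a vertex \(v\), consider the sets reachable from \(N(v)\) by paths of prescribed odd length \(\ell\le 2k-3\). Because \(G\) is \(C_{2k-1}\)-free, the neighborhood \(N(v)\) is disjoint from the set of endpoints of odd paths of length \(2k-3\) that start in \(N(v)\) and avoid \(v\), apart from degenerate cases.
\item The new ingredient suggested by the abstract is to control common neighborhoods \(N(x)\cap N(y)\) for pairs \(x,y\) joined by a short odd path. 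If such a pair has a common neighborhood of size \(\Omega(n)\), this closes a \(C_{2k-1}\) after rerouting. Otherwise these neighborhoods are nearly disjoint.
\item Summing the degrees along a carefully chosen family of roughly \(2k-1\) vertices, plus a correction term, gives \(n\ge\bigl((2k-1)+\tfrac{1}{4(k-1)}\bigr)(\alpha+\eps)n\), a contradiction. This explains the exact form of the bound: the extra \(\tfrac{1}{4(k-1)}\) comes from overlaps shared among about \(4(k-1)\) vertices.
\end{enumerate}
Having excluded these configurations, I would show that \(G\) has a bounded-size "type" partition, via neighborhood vectors on a bounded sample of vertices. Then I would define an explicit \((2k-2)\)-coloring. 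Plausibly one colors by distance classes from a fixed short odd cycle, a shortest one among \(C_3,\dots,C_{2k-3}\), or uses a bipartite split if there is none. Vertices at distance \(i\) get color \(i\) or \(i'\), and the degree condition makes the classes independent. The main obstacle is the counting step (iii)–(iv): getting exactly the constant \(4(k-1)\) requires a precise double count over the odd paths. I would first try averaging over a closed odd walk of length \(4k-3\) around a shortest odd cycle.

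\textbf{Lower bound.} I would use the sparse homomorphism theorem of Nešetřil and Zhu. It says that for any graph \(F\) and any \(g\), there is a graph \(F'\) of girth \(>g\) with \(F'\hto F\), and such that \(F'\) maps to a graph \(H\) of bounded size only if \(F\) does. I would start from a core graph \(F\), built on a \((k-1)\)-ary tree-like structure of depth about \(4k-5\) that attaches to odd cycles \(C_{2k+1}\). The design goal is that \(F\) is \(C_{2k-1}\)-free and does not map to any bounded \(C_{2k-1}\)-free graph in a topological, Sankar-type way; the construction should be such that \(G\) maps to a bounded \(C_{2k-1}\)-free graph only if a small certificate forces a \(C_{2k-1}\). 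Then I would blow up vertices with weights chosen so that every vertex has degree at least \(n/(2N)\), where
\[
N=(k-1)^{4k-5}(2k-1)+\frac{(k-1)^{4k-5}-1}{k-2}.
\]
Here \(N\) counts the weighted vertex classes: a geometric sum \(1+(k-1)+\dots+(k-1)^{4k-6}\) of tree layers, plus \((2k-1)(k-1)^{4k-5}\) cycle classes. I would verify three things: the blowup stays \(C_{2k-1}\)-free (using the girth of the Nešetřil–Zhu graph), the degree condition holds, and any homomorphism to a bounded \(C_{2k-1}\)-free graph would yield one for a finite obstruction, which is impossible. Getting the weights and the non-mappability argument to match simultaneously is the delicate point.
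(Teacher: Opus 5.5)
Your proposal has some correct ingredients: codegrees along short odd paths, the Ne\v{s}et\v{r}il--Zhu sparsification, and the arithmetic form of both constants. But on each side the step that actually carries the proof is missing, and part of the lower-bound plan is inconsistent. Below, $m=2k-1$.

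\textbf{Upper bound.} Steps (iii)--(iv) never say which configuration of ``roughly $2k-1$ vertices'' is counted, so no contradiction is derived. The identity $\bigl((2k-1)+\frac{1}{4(k-1)}\bigr)\alpha=1$ is just the definition of $\alpha$.

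Step (i) is global: deleting $o(n^{2})$ edges does not preserve the minimum degree at individual vertices. The paper instead proves a \emph{local} statement. If $u,v$ are joined by an odd path of length at most $m-2$, then $|N(u)\cap N(v)|<m^{2}+m$. To see this, apply Erd\H{o}s--Gallai to the bipartite graph between $m^{2}$ common neighbors and the rest; this yields an even path of the complementary length between two common neighbors, closing a $C_m$. Inclusion--exclusion over $m+1$ vertices then shows that every vertex lies on at most $m$ edges of odd cycles of length at most $m$. Deleting these edges gives $F$ with $\delta(F)\ge\delta(G)-m$ and no $F$-edge in a $C_{3}$ or $C_{5}$ of $G$.

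Your coloring step has no mechanism. Nothing in your plan makes distance layers from a short odd cycle independent or bipartite, or bounds their number by $k-1$. A bounded type partition yields a homomorphism to some bounded graph, not a $(2k-2)$-coloring.

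The paper's key idea is this. Take a maximal family of pairwise disjoint $F$-neighborhoods $A_{i}=N_{F}(s_{i})$, and color $v$ by any $i$ with $N_{F}(v)\cap A_{i}\neq\varnothing$. This is proper, since otherwise a closed $5$-walk containing $F$-edges appears. If $\chi(G)\ge m$ there are exactly $m$ such sets. Under $\delta(F)>2(m-1)(n-m\delta(F)+1)$, most vertices of $A_i$ send almost all their $F$-edges into a single class $A_{f(i)}$, and $f$ is a fixed-point-free permutation. A double count gives $|X_{f(i)}|<|X_{i}|$ whenever $f^{2}(i)\neq i$. So $f$ would be a product of transpositions on the odd set $[m]$, which is impossible. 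This is where $4(k-1)=2(m-1)$ comes from: it bounds the defect $n-m\delta(F)$, not overlaps among $4(k-1)$ vertices.

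\textbf{Lower bound.} Your stated design goal is self-defeating. A finite $C_{2k-1}$-free core is itself a bounded $C_{2k-1}$-free target via the identity map. So the obstruction must contain $C_{2k-1}$, i.e.\ have odd girth exactly $2k-1$; cycles $C_{2k+1}$ play no role. Your core is also reverse-engineered from the formula for $N$, and its non-mappability, which is the heart of the bound, is left as ``topological, Sankar-type''.

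The paper uses $T_{k}=M_{k-2}^{4k-5}(C_{2k-1})$, an iterated generalized Mycielskian with $|M_{q}(G)|=(q+1)|V(G)|+1$; this produces exactly your count. Non-mappability follows from three facts.
\begin{enumerate}
\item $M_{q}(G)\hto F$ if and only if $G\hto A_{q,j}(F)$ for some $j$, where $A_{q,j}(F)$ is the tuple graph on $(a_{0},\ldots,a_{q})$ with $a_{q}\in N_{F}(j)$.
\item For $C_{2k-1}$-free $F$, every non-isolated tuple has $a_{0}$ in the ball $B_{F}(j,k-1)$. This ball is $(4k-4)$-colorable by the theorem of Corsini et al., so $\chi(A_{q,j}(F))\le4k-4$.
\item Stiebitz's theorem gives $\chi(M_{q}^{4k-5}(K_{2}))\ge4k-3$.
\end{enumerate}

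Your density step is likewise unspecified. Blowing up the core reintroduces $C_{2k-1}$, and the girth of the sparse graph alone does not control cycles through dense attachments. The paper pads the fibers $R_{i}=\psi^{-1}(t_{i})$ of $W\hto T$ to a common size $r$. It then attaches independent sets $X_{i}$ of size $r$, each complete to $R_{i}$, giving $n=2Nr$ and $\delta\ge r$. An $m$-cycle meeting $X$ in $a\ge1$ vertices projects to an odd closed walk of length $m-2a$ in $T$. This is impossible precisely because $T$ has odd girth $m$.
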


The lower bound gives the bound of order \(k^{-4k+O(1)}\), improving the lower bound obtained from Sankar's construction~\cite{2022sankar}, which is of order \(k^{-(8+o(1))k^{2}}\). It is built from generalized Mycielski graphs and a sparse homomorphism theorem of Ne\v{s}et\v{r}il and Zhu~\cite{2004JCTBZhu}. The upper bound comes from a different mechanism. In particular, the upper bound argument does not merely replace the old target by another bounded graph, it even forces an ordinary \((2k-2)\)-coloring. For \(k=3\), the upper bound in \cref{thm:main} gives
\(
\homthr(C_{5})\le\frac{8}{41}<\frac{1}{5}.
\)
answering the question of Letzter and Snyder~\cite{2019JGTC3C5} in negative. 

We then obtain several consequences of \cref{thm:main}. Fox and
Wigderson~\cite{FoxWigderson} introduced the polynomial and linear
removal thresholds, denoted by
\(\delta_{\textup{poly-rem}}(H)\) and
\(\delta_{\textup{lin-rem}}(H)\), which measure the minimum degree
density above which the \(H\)-removal lemma admits, respectively,
polynomial and linear quantitative dependence. They made the following
two conjectures.
\begin{conjecture}[Fox--Wigderson~\cite{FoxWigderson}]
For every odd integer \(m\ge5\),
\(
\delta_{\textup{poly-rem}}(C_{m})>\homthr(C_{m}).
\)
\end{conjecture}

\begin{conjecture}[Fox--Wigderson~\cite{FoxWigderson}]
There exists a graph \(H\) for which
\(\chithr(H)\), \(\homthr(H)\),
\(\delta_{\textup{poly-rem}}(H)\), and
\(\delta_{\textup{lin-rem}}(H)\) are pairwise distinct.
\end{conjecture}

Both conjectures are completely resolved by odd cycles. Indeed,
Thomassen~\cite{2007CombC5} proved that \(\chithr(C_{m})=0\), while
Gishboliner, Jin and Sudakov~\cite{GishbolinerJinSudakov} determined
both removal thresholds of an odd cycle. Combining their results with
\cref{thm:main} gives the following strict chain.

\begin{cor}\label{cor:four-thresholds}
For every odd integer \(m\ge5\),
\[
0=\chithr(C_{m})
<\homthr(C_{m})
<\delta_{\textup{poly-rem}}(C_{m})=\frac{1}{m}
<\delta_{\textup{lin-rem}}(C_{m})=\frac{1}{4}.
\]
\end{cor}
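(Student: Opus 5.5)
This corollary is a bookkeeping step: we assemble the chain from \cref{thm:main} and the cited results. Write \(m=2k-1\) with \(k\ge3\); the hypothesis "\(m\ge5\) odd" is exactly this. We verify the four links one at a time.

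\textbf{The equality \(\chithr(C_m)=0\).} This is Thomassen's theorem~\cite{2007CombC5}, valid for every odd \(m\ge5\).

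\textbf{The strict inequality \(0<\homthr(C_m)\).} The lower bound in \cref{thm:main} is the reciprocal of a finite positive quantity. Indeed, for \(k\ge3\) the expression \(\frac{(k-1)^{4k-5}-1}{k-2}\) is the geometric sum \(\sum_{i=0}^{4k-6}(k-1)^i\), so it is a positive integer. Hence the lower bound is strictly positive, and Sankar's result~\cite{2022sankar} is not even needed here.

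\textbf{The strict inequality \(\homthr(C_m)<\frac1m\).} This is exactly the upper bound in \cref{thm:main}. The only thing to check is the final strict comparison there, namely
\[
\frac{4(k-1)}{4(k-1)(2k-1)+1}<\frac{1}{2k-1}.
\]
Cross-multiplying, this is \(4(k-1)(2k-1)<4(k-1)(2k-1)+1\), which is trivially true.

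\textbf{The equalities \(\delta_{\textup{poly-rem}}(C_m)=\frac1m\) and \(\delta_{\textup{lin-rem}}(C_m)=\frac14\).} These values are quoted from Gishboliner, Jin and Sudakov~\cite{GishbolinerJinSudakov}. The only care needed is to confirm that their statements cover every odd \(m\ge5\) and use the same normalization of minimum degree density, \(\delta(G)\ge\alpha|V(G)|\).

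\textbf{The strict inequality \(\frac1m<\frac14\).} This holds since \(m\ge5\).

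Chaining these links gives the displayed statement. Taking any odd \(m\ge5\), the four thresholds of \(C_m\) are pairwise distinct, which settles the second Fox--Wigderson conjecture. The link \(\homthr(C_m)<\frac1m=\delta_{\textup{poly-rem}}(C_m)\) settles the first.

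There is no real obstacle; all the substance lies in \cref{thm:main}. The step most liable to error is matching the precise hypotheses and definitions of the removal thresholds in~\cite{GishbolinerJinSudakov} with the present setting, in particular the range of \(m\) covered.
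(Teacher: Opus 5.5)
Your proposal is correct and matches the paper's route: the chain is obtained by combining Thomassen's \(\chithr(C_m)=0\), the positive lower bound and the strict upper bound \(\frac{2(m-1)}{2m(m-1)+1}<\frac{1}{m}\) from \cref{thm:main}, and the two removal-threshold values of Gishboliner, Jin and Sudakov. The final comparison \(\frac{1}{m}<\frac{1}{4}\) is immediate from \(m\ge5\).
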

Thus every odd cycle of length at least five answers the existential
second conjecture, while the whole family verifies the first one.
There is also a strict separation from the blowup threshold. Huang,
Liu, Rong and Xu~\cite{2025Blowup} proved that
\(\delta_{\textup{B}}(C_{m})=\frac{1}{m}\), and hence
\[
\homthr(C_{m})
<\delta_{\textup{B}}(C_{m})
=\delta_{\textup{poly-rem}}(C_{m}).
\]
In particular, \cref{thm:main} disproves their conjecture that
\(\homthr(H)=\delta_{\textup{B}}(H)\) for every graph \(H\)~\cite[Conjecture~5.1]{2025Blowup}.
To the best of our knowledge, odd
cycles form the first natural infinite family for which a strict
separation between the homomorphism and blowup thresholds is
established. 

There is also an asymmetric consequence. For two families of graphs \(\mathcal{F}_{1}\) and \(\mathcal{F}_{2}\), let \(\delta_{\textup{hom}}(\mathcal{F}_{1};\mathcal{F}_{2})\) be the minimum degree threshold in which the source graph is required to be \(\mathcal{F}_{1}\)-free and the bounded target is required to be \(\mathcal{F}_{2}\)-free. Gishboliner, Hurley and Wigderson~\cite[Theorem~1.3]{GHW2025} proved
that, for every odd \(m\ge3\),
\(
\homthr(\mathcal C_{m+4};\mathcal C_{m})=0,
\) and they conjectured that already
\(\homthr(\mathcal C_{m+2};\mathcal C_{m})=0\)
\cite[Conjecture~1.4]{GHW2025}. Our result identifies a different
phenomenon at the positive endpoint.
\begin{cor}\label{cor:asymmetric-threshold}
Let \(k\ge3\) and \(m=2k-1\). Then
\[
\frac{1}{2\left((k-1)^{4k-5}(2k-1)+\frac{(k-1)^{4k-5}-1}{k-2}\right)}
\le
\homthr(\mathcal C_{m};\{C_{m}\})
\le
\frac{2(m-1)}{2m(m-1)+1}
<
\frac{1}{m}
=
\homthr(\mathcal C_{m};\mathcal C_{m}).
\]
\end{cor}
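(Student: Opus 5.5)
The plan is to read \cref{cor:asymmetric-threshold} off \cref{thm:main} together with the Ebsen--Schacht result \(\homthr(\mathcal C_{m};\mathcal C_{m})=\frac{1}{m}\). I will establish the four relations in the displayed chain separately.

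\emph{Upper bound.} With \(m=2k-1\) we have \(m-1=2(k-1)\). Hence
\[
\frac{2(m-1)}{2m(m-1)+1}=\frac{4(k-1)}{4(k-1)(2k-1)+1},
\]
which is exactly the upper bound in \cref{thm:main}. Now take a sufficiently large \(\mathcal C_m\)-free graph \(G\) with \(\delta(G)\ge(\frac{2(m-1)}{2m(m-1)+1}+\eps)|V(G)|\). It is in particular \(C_{2k-1}\)-free. So the ``moreover'' part of \cref{thm:main} gives a homomorphism \(G\hto K_{2k-2}\). Since \(2k-2\ge4\), the clique \(K_{2k-2}\) contains triangles, so it is not a legitimate target when the target must be \(\mathcal C_m\)-free. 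For the \(\{C_m\}\)-free requirement, however, \(K_{2k-2}\) has only \(2k-2<2k-1\) vertices, so it contains no \(C_{2k-1}\). It is therefore a valid bounded \(\{C_m\}\)-free target with \(b=2k-2\), which gives the upper bound. The strict inequality \(<\frac1m\) is just \(4(k-1)(2k-1)<4(k-1)(2k-1)+1\) after cross-multiplying.

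\emph{Lower bound.} This step needs care, because a lower bound on \(\homthr(C_{2k-1})=\homthr(\{C_m\};\{C_m\})\) does not automatically transfer to the version in which the source must be \(\mathcal C_m\)-free. Making the source class smaller can only lower the threshold. I would therefore check that the lower-bound construction behind \cref{thm:main} (generalized Mycielski graphs combined with the Ne\v{s}et\v{r}il--Zhu sparse homomorphism theorem) produces source graphs with odd girth greater than \(2k-1\), not merely \(C_{2k-1}\)-free ones. This is plausible, since Ne\v{s}et\v{r}il--Zhu is precisely a tool for producing graphs of large (odd) girth while preserving the relevant homomorphism obstruction. If so, the same family witnesses that no bounded \(C_m\)-free target exists at that density even for \(\mathcal C_m\)-free sources. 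This verification is the main obstacle: I have to revisit the construction in the lower-bound section and confirm that every short odd cycle is excluded, not only \(C_{2k-1}\).

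\emph{Final equality.} The last equality \(\homthr(\mathcal C_m;\mathcal C_m)=\frac1m\) is the Ebsen--Schacht theorem quoted in the introduction.
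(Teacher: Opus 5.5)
Your upper bound, strict inequality and final equality match the paper. The gap is in the lower bound. You correctly see that the source graphs must have odd girth greater than $m$, but you leave this unverified, and the reason you give for its plausibility does not cover the part that needs checking.

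The Ne\v{s}et\v{r}il--Zhu lemma (\cref{lem:sparse}) controls only the sparse core $W_0$. The graph $G$ of \cref{prop:machine} also contains the independent sets $X_1,\dots,X_N$, with $X_i$ complete to the padded fiber $R_i$. These added vertices are what could create short odd cycles. Ruling such cycles out uses the odd girth of $T$, not that of $W_0$. Since $T_k$ has odd girth exactly $m$ and itself contains $C_m$, the claim is not automatic.

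The missing step is the projection argument of \cref{cl:machine-free}, run for every odd length $\ell\le m$ rather than only $\ell=m$; the paper does this in \cref{rmk:lower-consequences}.
\begin{itemize}
\item Let $C$ be an odd cycle of length $\ell\le m$ in $G$, and put $a=|V(C)\cap X|$ with $X=X_1\cup\cdots\cup X_N$.
\item Then $a\ge1$, since $G-X=W$ is $W_0$ plus isolated vertices and so has odd girth greater than $m$.
\item Also $a\le\frac{\ell-1}{2}$, since $X$ is independent.
\item Each $x\in X_i$ on $C$ has both of its $C$-neighbors in $R_i$, so both map to $t_i$ under the homomorphism $\psi\colon W\hto T$.
\item Delete the vertices of $X$ from $C$ and project the remaining paths, whose edges all lie in $W$, by $\psi$. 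Gluing the two ends at each $t_i$ gives a closed walk in $T$ of odd length $\ell-2a$, with $1\le \ell-2a\le m-2$.
\item Length $1$ is impossible because $T$ is loopless. Otherwise the walk contains an odd cycle of length less than $m$, contradicting that $T_k$ has odd girth $m$ (\cref{prop:Tk}).
\end{itemize}
So $G$ is $\mathcal C_m$-free. \cref{cl:machine-no-target} then applies verbatim: restrict a homomorphism $G\hto F$ to $W_0$ and use \cref{lem:sparse} to get $T_k\hto F$, contradicting \cref{prop:Tk}. This yields the stated lower bound.
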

The upper bound follows directly from \cref{thm:main}: every
\(\mathcal C_{m}\)-free graph is \(C_{m}\)-free, and the target
\(K_{m-1}\) supplied there is itself \(C_{m}\)-free. The strict
inequality is immediate, while the final equality is due to Ebsen and
Schacht~\cite{2020CombOdd}. The lower bound follows from the
strengthened form of our construction recorded in
\cref{rmk:lower-consequences}. Thus, while keeping the source family fixed, weakening the target condition from excluding every odd cycle of length at most \(m\) to
excluding only \(C_{m}\) strictly lowers the threshold, but does not
make it vanish.

Our methods also give new information about chromatic profiles. For a
graph or a family \(\mathcal F\) and an integer \(c\ge2\), let
\(\delta_{\chi}(\mathcal F,c)\) be the infimum of all \(\alpha\) such
that, for every \(\eps>0\), every sufficiently large
\(\mathcal F\)-free graph \(G\) with
\(\delta(G)\ge(\alpha+\eps)|V(G)|\) is \(c\)-colorable. Several parts
of the odd cycle profile are already known. For every odd \(m\ge5\),
the case \(c=2\) is completely determined: combining the work of Yuan
and Peng~\cite{yuan2024minimum} with the earlier small cycle cases
summarized in~\cite{YanPengYuan2024} gives
\(
\delta_{\chi}(C_{m},2)
=
\max\left\{\frac{1}{6},\frac{2}{m+2}\right\}.
\) For \(c\ge3\),
Yan, Peng and Yuan~\cite{YanPengYuan2024} proved
\(
\delta_{\chi}(C_{m},c)=\frac{1}{2c+2}
\)
whenever \(m=2k-1\) and \(k\ge3c+5\). Outside this long cycle range,
few exact values are known. Our arguments give the following bounds
in complementary parameter regimes.
\begin{theorem}\label{thm:chromatic-profile}
Let \(k\ge3\) and \(m=2k-1\). The following statements hold.
\begin{enumerate}
\item[\textup{(1)}] For every \(c\ge m-1\),
\(
\delta_{\chi}(C_{m},c)
\le\frac{2(m-1)}{2m(m-1)+1}.
\)
\item[\textup{(2)}] Let \(c\ge4\) and \(r=2\floor{\frac{c}{2}}\). Then
\(
\delta_{\chi}(\{C_{3},C_{5}\},c)
\le\frac{2r}{2r(r+1)+1}
\)
\item[\textup{(3)}] For every \(c\ge2\), 
\(
\delta_{\chi}(C_{m},c)
\ge\delta_{\chi}(\mathcal C_{m},c)
\ge\frac{1}{2\left((k-1)^{c-2}(2k-1)
+\frac{(k-1)^{c-2}-1}{k-2}\right)}.
\)
\end{enumerate}
\end{theorem}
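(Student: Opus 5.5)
Statement (1) follows from the "moreover" part of \cref{thm:main}. Note that \(\frac{4(k-1)}{4(k-1)(2k-1)+1}=\frac{2(m-1)}{2m(m-1)+1}\) when \(m=2k-1\). \cref{thm:main} says every sufficiently large \(C_m\)-free \(G\) with \(\delta(G)\ge(\frac{2(m-1)}{2m(m-1)+1}+\eps)|V(G)|\) maps to \(K_{m-1}\). Such a \(G\) is therefore \((m-1)\)-colorable, and hence \(c\)-colorable for every \(c\ge m-1\).

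For statement (2) I would rerun the structural argument behind the upper bound of \cref{thm:main} with a general even parameter \(r\) in place of \(2k-2\). The plan has four steps.
\begin{enumerate}
\item[(a)] Fix an even \(r\ge4\) and let \(G\) be a \(\{C_3,C_5\}\)-free graph with \(\delta(G)\ge(\frac{2r}{2r(r+1)+1}+\eps)n\). This threshold is slightly below \(\frac{1}{r+1}\).
\item[(b)] Use the common-neighbourhood control along short odd paths from the proof of \cref{thm:main}. Forbidding both \(C_3\) and \(C_5\) makes neighbourhoods independent and second neighbourhoods structured. The key quantity is the density threshold at which \(G\) can contain an odd cycle of length at least \(r+1\) with no proper \(r\)-colouring.
\item[(c)] The bound \(\frac{2r}{2r(r+1)+1}\) is the same formula as in \cref{thm:main} with \(2(k-1)\) replaced by \(r\). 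So I would redo that counting with the cycle length \(2k-1\) replaced by the shortest obstruction length \(r+1\), ending in an \(r\)-colouring.
\item[(d)] Since \(r=2\floor{c/2}\le c\), this gives a \(c\)-colouring.
\end{enumerate}
The main obstacle is step (b). One must check that the proof of \cref{thm:main} uses \(C_{2k-1}\)-freeness only through the absence of short odd cycles near a vertex, and only through the parameter \(2k-2\). If that holds, forbidding \(C_3\) and \(C_5\) together with the degree bound should suffice, with \(r\) playing the role of \(2k-2\).

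For statement (3), the first inequality is trivial, because \(\mathcal C_m\)-free graphs are \(C_m\)-free. For the second, I would use the lower-bound construction from generalized Mycielski graphs, in the strengthened form of \cref{rmk:lower-consequences}, taking \(c-2\) iterations instead of \(4k-5\). Starting from a \(C_{2k-1}\)-type base, each generalized Mycielski step with \(k-1\) levels raises the chromatic number by one and keeps odd girth at least \(2k+1\). The vertex weights in the blowup multiply by \(k-1\) per level. Choosing a balanced weighting makes the minimum degree a fraction \(\frac{1}{2((k-1)^{c-2}(2k-1)+\sum_{i<c-2}(k-1)^i)}\) of the total weight, and the geometric sum equals \(\frac{(k-1)^{c-2}-1}{k-2}\).

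Blowing up gives arbitrarily large \(\mathcal C_m\)-free graphs with this minimum degree density and chromatic number \(c+1\), since blowups preserve both odd girth and chromatic number. Replacing them by the sparse homomorphism theorem of Ne\v{s}et\v{r}il--Zhu is not needed here, because only non-\(c\)-colourability matters. The delicate point is the weight bookkeeping: the degree condition has to hold simultaneously at every level of the Mycielski tower.
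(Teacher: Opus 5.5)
Part (1) is fine and matches the paper. The genuine gap is in part (3). A generalized Mycielski step with $k-1$ levels is $M_q$ with $q=k-2$. \cref{cl:mycielski-odd-girth} only guarantees odd girth at least $2q+3=2k-1$ for such a step, not $2k+1$. Indeed $M_{k-2}(K_2)\cong C_{2k-1}$, and your base copy of $C_{2k-1}$ survives as the level-$0$ copy in every iterate. So $T_{k,c}=M_{k-2}^{c-2}(C_m)$ has odd girth exactly $m$. Since blowups preserve odd girth (as you yourself note), every blowup of it contains $C_m$, so it is not $\mathcal C_m$-free, and not even $C_m$-free.

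This is why the Ne\v{s}et\v{r}il--Zhu step cannot be dropped: it is what raises the odd girth above $m$ while preserving non-$c$-colourability. The paper applies \cref{lem:sparse} with $H=T_{k,c}$, target size $c$ and $L=m$. This gives a core $W$ of odd girth $>m$ with $W\hto T_{k,c}$ and $W\nhto K_c$. The latter holds because $T_{k,c}\cong M_q^{c-1}(K_2)$ has $\chi\ge c+1$ by \cref{lem:stiebitz}. The paper then pads the fibers $R_i$ to a common size $t$ and attaches independent sets $X_i$ of size $t$ complete to $R_i$.

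The density $\frac{1}{2N_{k,c}}$, with $N_{k,c}=|V(T_{k,c})|$, comes from this doubling: $|V(G)|=2N_{k,c}t$ and $\delta(G)\ge t$. It does not come from any weighting of the tower. Odd girth $>m$ follows by deleting the $X$-vertices of a short odd cycle and gluing, which yields an odd closed walk in $T_{k,c}$ of length $<m$. Your unspecified ``balanced weighting'' is therefore moot. A genuine blowup route would need a different base of odd girth $>m$, for instance one built from $M_{k-1}$-steps. That base has different vertex counts and needs a fresh density computation, which your proposal does not supply.

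For part (2), your instinct to replace $2k-2$ by $r$ is right, but step (b) points at the wrong tool. The codegree control of \cref{lem:upper-codegree} needs $C_{r+1}$-freeness, which a $\{C_3,C_5\}$-free graph need not have, and no deletion step is needed at all. The check you flag as the main obstacle is already recorded in \cref{lem:upper-selector}. That lemma holds for every odd $m\ge5$ and uses only that $F$-edges lie in no $C_3$ or $C_5$ of $G$. Take $F=G$ and $m=r+1$, which is odd precisely because $r$ is even; the parity is used in the final permutation argument. For large $n$ the hypothesis $\delta(G)>\frac{2r}{2r(r+1)+1}(n+1)$ holds, and the lemma gives $\chi(G)\le r\le c$.
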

Part~(1) follows immediately from \cref{thm:main}, which gives an \((m-1)\)-coloring above the stated density. Part~(2) can be easily obtained from~\cref{lem:upper-selector}. The construction proving part~(3) is described in
\cref{rmk:lower-consequences}. Notice that the exact result of Yan, Peng and Yuan~\cite{YanPengYuan2024} applies when the forbidden
cycle is long compared with the prescribed number of colors, whereas
\cref{thm:chromatic-profile}(1) treats the opposite range
\(c\ge m-1\). For the joint forbidden family
\(\{C_{3},C_{5}\}\), Thomassen's result~\cite{2007CombC5}
\(\delta_{\chi}(C_{5},c)\le\frac{6}{c}\) also gives
an upper bound of \(\frac{6}{c}\), while part~(2) improves the leading
constant from this comparison to \(1+o(1)\). Part~(3) provides a uniform construction, especially suited to diagonal regimes in which \(k\) and \(c\) grow together.

\section{The improved upper bound}\label{sec:upper}

In this section, we prove the upper bound in \cref{thm:main}.  Throughout
the section, fix an integer \(k\ge3\), put \(m=2k-1\), and note that
\(m\ge5\) is odd.  For a positive integer \(t\), write
\([t]=\{1,\ldots,t\}\).

The proof has two stages.  We first show that every vertex is incident
with only boundedly many edges lying in a short odd cycle.  After
deleting all these edges, we prove that the remaining spanning
subgraph still has enough minimum degree to force the ambient graph
to be \((m-1)\)-colorable.

We shall repeatedly work with a spanning subgraph \(F\) of an ambient
graph \(G\).  An \(F\)-edge is an edge belonging to \(E(F)\).  An
\(F\)-neighbor of a vertex \(v\) is a vertex joined to \(v\) by an
\(F\)-edge, the set of all such vertices is the \(F\)-neighborhood
\(N_{F}(v)\), and \(d_{F}(v)=|N_{F}(v)|\).  Thus a \(G\)-edge may or
may not be an \(F\)-edge.

We begin with the celebrated Erd\H{o}s--Gallai path theorem~\cite{1959ErdosGallai}.
\begin{lemma}[\cite{1959ErdosGallai}]\label{lem:upper-EG}
Let \(J\) be a graph on \(N\) vertices, and let \(\ell\ge1\).  If \(J\)
does not contain a path with \(\ell\) edges, then
\(e(J)\le\frac{(\ell-1)N}{2}\).
\end{lemma}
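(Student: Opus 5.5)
The plan is to follow the classical argument via longest paths and reduce to the case of a connected graph. We may assume \(J\) is connected: if the statement holds for every connected graph, we apply it to each component. A component with \(N_i\) vertices contains no path with \(\ell\) edges, so it has at most \(\frac{(\ell-1)N_i}{2}\) edges, and summing over components gives the bound. For connected \(J\) we will in fact prove the stronger statement that \(J\) has a vertex of degree at most \(\frac{\ell-1}{2}\), or else \(N\le \ell\). In the second case the bound is trivial, since \(e(J)\le\binom{N}{2}=\frac{(N-1)N}{2}\le\frac{(\ell-1)N}{2}\). In the first case we delete that vertex and induct on \(N\). Deletion may disconnect the graph, so the induction is run over all graphs with no path of \(\ell\) edges, with the component reduction applied at each step. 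Each deletion removes at most \(\frac{\ell-1}{2}\) edges, which gives the claim.

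The core step is a Pósa-type rotation argument. Suppose \(J\) is connected, every vertex has degree at least \(\ell/2\), and \(N\ge \ell+1\). Take a longest path \(P=v_0v_1\cdots v_p\), so \(p\le \ell-1\). By maximality, all neighbors of \(v_0\) and all neighbors of \(v_p\) lie on \(P\). Let \(A=\{i: v_0v_{i+1}\in E\}\) and \(B=\{i: v_iv_p\in E\}\). Both are subsets of \(\{0,\dots,p-1\}\) of size at least \(\ell/2\), and \(|A|+|B|\ge \ell>p\), so some index \(i\) lies in both. Then \(v_0\cdots v_i v_p v_{p-1}\cdots v_{i+1} v_0\) is a cycle through all \(p+1\) vertices of \(P\). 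Since \(N>p+1\) and \(J\) is connected, some vertex off the cycle is adjacent to a cycle vertex. Opening the cycle at that vertex produces a path with \(p+1\) edges, contradicting the maximality of \(P\).

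Therefore, in the connected case with \(N\ge\ell+1\), some vertex has degree less than \(\ell/2\), hence at most \(\frac{\ell-1}{2}\). The main point requiring care is the bookkeeping of this induction: components with at most \(\ell\) vertices are handled by the trivial bound, and larger components always supply a low-degree vertex to delete. With both cases covered, the induction on \(N\) goes through.
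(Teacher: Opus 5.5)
Your proof is correct. The paper gives no proof of this lemma; it quotes the Erd\H{o}s--Gallai theorem~\cite{1959ErdosGallai} and uses it as a black box, only in \cref{lem:upper-codegree} with \(\ell=m-1\) on the bipartite graph \(E_G(S,T)\). So your argument supplies what the paper cites.

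What you wrote is the standard textbook proof:
\begin{itemize}
\item strong induction on \(N\) over all graphs with no path of \(\ell\) edges, passing to components when \(J\) is disconnected;
\item the trivial bound \(\binom{N}{2}\le\frac{(\ell-1)N}{2}\) when \(N\le\ell\);
\item otherwise, a vertex of degree at most \(\frac{\ell-1}{2}\) among the endpoints of a longest path, followed by deletion of that vertex.
\end{itemize}

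The key step is the Dirac/Ore closing argument, not a P\'osa rotation, since no rotation is performed. If \(d(v_0)+d(v_p)\ge p+1\) for a longest path \(v_0\cdots v_p\), its vertex set spans a cycle, and connectivity together with \(N>p+1\) then extends it. One small point: when \(p=1\), which can only happen if \(\ell=2\), your ``cycle'' degenerates to the single edge \(v_0v_1\). The extension step still goes through, because a neighbor off \(\{v_0,v_1\}\) still gives a path with two edges.

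Compared with citing the theorem, your route costs a paragraph but makes the upper-bound section self-contained. It also yields slightly more than the edge bound: in a connected graph with \(N\ge\ell+1\) and no path of \(\ell\) edges, some endpoint of every longest path has degree at most \(\frac{\ell-1}{2}\). Such graphs can therefore be peeled down by low-degree deletions to components of order at most \(\ell\). The paper only needs the edge count, so this extra information is not used.
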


The next lemma gives the local estimate behind the deletion step.  If
the endpoints of a short odd path had many common neighbors, then
those common neighbors and the minimum degree condition would
produce a second path of exactly the complementary length.

\begin{lemma}\label{lem:upper-codegree}
Let \(n\ge2m^{2}(m+1)\), and let \(G\) be an \(n\)-vertex
\(C_{m}\)-free graph with
\(\delta(G)\ge\frac{2n}{2m+1}\).
If distinct vertices \(u,v\in V(G)\) are joined by an odd path of
length at most \(m-2\), then
\(|N_{G}(u)\cap N_{G}(v)|<m^{2}+m\).
\end{lemma}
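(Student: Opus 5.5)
The plan is to argue by contradiction and build a copy of \(C_{m}\) directly. Let \(P\) be an odd \(u\)–\(v\) path of length \(\ell\le m-2\). Since \(m\) and \(\ell\) are both odd, \(m-\ell=2j\) is even with \(j\ge1\). It therefore suffices to find a \(u\)–\(v\) path of length exactly \(2j\) whose interior avoids \(V(P)\): together with \(P\) it closes a cycle of length \(m\).

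\textbf{Target path.} Suppose \(W=N_G(u)\cap N_G(v)\) has \(|W|\ge m^2+m\), and put \(W'=W\setminus V(P)\), so \(|W'|\ge m^2+2\). I look for a path
\[
u,\,w_1,\,y_1,\,w_2,\,y_2,\,\ldots,\,y_{j-1},\,w_j,\,v ,
\]
where \(w_i\in W'\) are distinct, \(y_i\) is a common neighbour of \(w_i\) and \(w_{i+1}\), and all these vertices are distinct and outside \(V(P)\). This path has exactly \(2j\) edges, as required. In the case \(j=1\) it is just \(u,w_1,v\).

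\textbf{Auxiliary graph.} Define a graph \(J\) on \(W'\) by joining \(w\) and \(w'\) when they have at least \(m\) common neighbours outside \(V(P)\). The key claim is that \(J\) has no independent set of size \(m+1\).

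\begin{itemize}
\item Take any \(m+1\) vertices of \(W'\). Each has at least \(\frac{2n}{2m+1}-m\) neighbours outside \(V(P)\).
\item The sum of these degrees is therefore at least \(n+\frac{n}{2m+1}-m(m+1)\).
\item Counting incidences with multiplicity, \(\sum_x\bigl(d_x-1\bigr)^+\ge \frac{n}{2m+1}-m(m+1)\), where \(d_x\) is the number of the chosen neighbourhoods containing \(x\). Each such \(x\) supplies at least one pair of the chosen vertices sharing \(x\).
\item There are \(\binom{m+1}{2}\) pairs, so some pair has at least \(\bigl(\frac{n}{2m+1}-m(m+1)\bigr)/\binom{m+1}{2}\ge m\) common neighbours outside \(V(P)\). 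The last inequality should follow from \(n\ge 2m^2(m+1)\), and checking this constant is the routine arithmetic step.
\end{itemize}

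\textbf{A long path in \(J\).} Since \(\alpha(J)\le m\), Turán's theorem applied to the complement gives \(e(J)\ge \frac{|W'|}{2}\bigl(\frac{|W'|}{m}-1\bigr)\). With \(|W'|\ge m^2+2\) this exceeds \(\frac{(m-3)|W'|}{2}\). By \cref{lem:upper-EG}, \(J\) then contains a path with \(m-2\ge j-1\) edges. Take a subpath \(w_1w_2\cdots w_j\) of it.

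\textbf{Choosing the connectors.} I choose \(y_1,\ldots,y_{j-1}\) greedily. For each \(i\), the pair \(w_i,w_{i+1}\) has at least \(m\) common neighbours outside \(V(P)\). The vertices to be avoided are the \(w\)'s already fixed and the \(y\)'s chosen so far, at most \(2j-2\le m-3\) of them. So an admissible \(y_i\) always exists. Here \(u\) and \(v\) lie in \(V(P)\) and are excluded automatically.

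This yields the desired path and hence a \(C_m\), a contradiction. The main obstacle is the counting step giving \(\alpha(J)\le m\) with the robust threshold of \(m\) common neighbours, since it is the only place where the minimum degree \(\frac{2n}{2m+1}\) and the bound on \(n\) enter. If the constants turn out too tight, I would lower the threshold in the definition of \(J\) to \(2j-1\) common neighbours, which is still enough for the greedy choice.
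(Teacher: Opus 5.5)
\textbf{Gap in the counting step.} The architecture of your argument is fine: the target path $u,w_1,y_1,\ldots,w_j,v$, the Tur\'an plus Erd\H{o}s--Gallai step inside $J$, and the greedy choice of the connectors $y_i$ all work once $\alpha(J)\le m$ is known. The gap is in the step you called routine arithmetic, and it is not routine. Your inequality $\bigl(\frac{n}{2m+1}-m(m+1)\bigr)/\binom{m+1}{2}\ge m$ is equivalent to $n\ge\frac12 m(m+1)(m+2)(2m+1)$. That is of order $m^{4}$, whereas the hypothesis only gives $n\ge 2m^{2}(m+1)$. At $n=2m^{2}(m+1)$ one has $\frac{n}{2m+1}=m(m+1)\cdot\frac{2m}{2m+1}<m(m+1)$, so your numerator is negative.

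Tighter bookkeeping does not save it. Use $\sum_x\binom{d_x}{2}\ge\sum_x(d_x-1)^+$, neighbourhoods of size at least $\delta(G)-|V(P)|$, and the smaller ground set $V(G)\setminus V(P)$. Even then $\sum_x(d_x-1)^+$ is only guaranteed to be at least $(m+1)\delta(G)-n-m|V(P)|$, which is about $m^{2}$ at this $n$. Averaging over $\binom{m+1}{2}$ pairs then gives a pair with only about two common neighbours. For example, take $m=7$, $n=784$, $\delta(G)=105$ and $\ell=1$. The excess is $42$ over $28$ pairs, so the count guarantees a pair with only $2$ common neighbours. Your greedy step for $j=3$ needs $3$, since $y_2$ must avoid $w_1$ and $y_1$. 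Your fallback threshold $2j-1$ equals $m-2$ when $\ell=1$, so it fails for the same reason. As written, your proof establishes the lemma only for $n$ of order $m^{4}$. That would still suffice for the ``sufficiently large $n$'' application in the theorem, but not for the statement as given.

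\textbf{How the paper avoids this.} The paper never uses pairwise codegrees. It takes $S\subseteq W'$ with $|S|=m^{2}$ and $T=V(G)\setminus(S\cup V(P))$. Each vertex of $S$ has at most $m^{2}-1$ neighbours in $S$ and at most $m-1$ on $P$. Hence $e(S,T)\ge m^{2}\bigl(\frac{2n}{2m+1}-m^{2}-m+2\bigr)>\frac{(m-2)n}{2}$, and this does hold for $n\ge2m^{2}(m+1)$. Erd\H{o}s--Gallai applied to the bipartite graph $E_G(S,T)$ then gives an $S$--$T$ alternating path with $m-1$ edges. A subpath with $m-\ell-2$ edges and both ends in $S$ is exactly your $w_1y_1w_2\cdots y_{j-1}w_j$, with the $y_i\in T$; all its vertices are automatically distinct and off $P$. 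The gain is that only the total edge count, roughly $mn$, has to beat the Erd\H{o}s--Gallai threshold of roughly $mn/2$, which already has slack when $n$ is of order $m^{3}$. Your route instead needs a single pair to share $\Omega(m)$ common neighbours, and that is what pushes $n$ up to order $m^{4}$.
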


\begin{proof}[Proof of Lemma~\ref{lem:upper-codegree}]
Let \(P\) be a path of odd length \(p\le m-2\) from \(u\) to \(v\).
If \(p=m-2\), then every common neighbor outside \(V(P)\) closes
\(P\) into a copy of \(C_{m}\).  Hence
\(|N_{G}(u)\cap N_{G}(v)|\le m-1<m^{2}+m\).

We can therefore assume that \(p\le m-4\).  Suppose for a contradiction
that \(|N_{G}(u)\cap N_{G}(v)|\ge m^{2}+m\).  Since \(P\) has at most
\(m-1\) vertices, we then choose a set
\[
S\subseteq\bigl(N_{G}(u)\cap N_{G}(v)\bigr)\setminus V(P)
\]
of size \(|S|=m^{2}\), and let
\(T=V(G)\setminus\bigl(S\cup V(P)\bigr)\). Notice that every vertex of \(S\)
has at most \(m^{2}-1\) neighbors in \(S\) and at most \(m-1\)
neighbors on \(P\), therefore by the
assumption \(n\ge2m^{2}(m+1)\), we have
\[
e(S,T)
\ge m^{2}\left(\delta(G)-m^{2}-m+2\right)
\ge m^{2}\left(\frac{2n}{2m+1}-m^{2}-m+2\right)>\frac{(m-2)n}{2}.
\]

The bipartite graph with parts \(S,T\) and edge set \(E_{G}(S,T)\)
has at most \(n\) vertices.  By \cref{lem:upper-EG}, the preceding
strict inequality gives an alternating path \(Q\) with \(m\) vertices,
or equivalently with \(m-1\) edges.

Because \(m\) and \(p\) are odd, \(m-p-2\) is even and lies between
\(2\) and \(m-3\).  Starting at the first vertex of \(Q\) if it lies
in \(S\), and at the second vertex otherwise, take a subpath \(Q'\)
with exactly \(m-p-2\) edges.  Its two endpoints lie in \(S\), and it
is disjoint from \(P\).  Join these endpoints to \(u\) and \(v\),
respectively, and then use \(P\) to return from \(v\) to \(u\).  The
resulting cycle has length \((m-p-2)+p+2=m\), a
contradiction.
\end{proof}

From now on, call an edge of a graph \(G\) \emph{bad} if it lies in an
odd cycle of length at most \(m\), and call it \emph{good} otherwise.
The next lemma is the bridge from the common degree estimate in Lemma \ref{lem:upper-codegree} to the later
structural argument: at the density needed for our theorem, every
vertex is incident with at most \(m\) bad edges.

\begin{lemma}\label{lem:upper-bad-degree}
Let \(G\) be an \(n\)-vertex \(C_{m}\)-free graph with
\(n\ge2m^{2}(m+1)\) and
\(\delta(G)\ge\frac{2(m-1)}{2m(m-1)+1}n\).  Suppose also that
\(
\frac{2m-3}{2m(m-1)+1}n
\ge\binom{m+1}{2}(m^{2}+m).
\)
Then every vertex of \(G\) is incident with at most \(m\) bad edges.
\end{lemma}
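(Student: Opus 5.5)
The plan is to derive a contradiction from the co-degree bound of \cref{lem:upper-codegree} by inclusion--exclusion on neighborhoods. Suppose some vertex \(x\) is incident with at least \(m+1\) bad edges \(xy_{1},\ldots,xy_{m+1}\). I would find \(m+1\) vertices \(z_{1},\ldots,z_{m+1}\) such that every pair \(z_{i},z_{j}\) is joined by an odd path of length at most \(m-2\). \cref{lem:upper-codegree} applies since \(n\ge 2m^{2}(m+1)\) and \(\frac{2(m-1)}{2m(m-1)+1}\ge\frac{2}{2m+1}\). It then gives \(|N(z_{i})\cap N(z_{j})|<m^{2}+m\) for all \(i\ne j\), and Bonferroni yields
\[
n\ \ge\ \Bigl|\bigcup_{i}N(z_{i})\Bigr|\ >\ (m+1)\delta(G)-\binom{m+1}{2}(m^{2}+m).
\]
Since \((m+1)\cdot 2(m-1)-(2m(m-1)+1)=2m-3\), we have \((m+1)\delta(G)-n\ge\frac{2m-3}{2m(m-1)+1}n\). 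By the hypothesis this is at least \(\binom{m+1}{2}(m^{2}+m)\), which contradicts the strict inequality above. So the numerology of the statement matches exactly this count with \(m+1\) vertices, and the real work is choosing the \(z_{i}\).

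Since \(G\) is \(C_{m}\)-free, each bad edge \(xy_{i}\) lies on an odd cycle \(C_{i}\) of length \(\ell_{i}\le m-2\). I would take \(C_{i}\) shortest, hence induced, so that \(C_{i}-xy_{i}\) is an \(x\)--\(y_{i}\) path of even length \(\ell_{i}-1\le m-3\). Pairs involving \(x\) are immediate: the edge \(xy_{i}\) is an odd path of length \(1\le m-2\). This suggests taking \(z\)'s among \(x,y_{1},\ldots,y_{m+1}\). For a pair \(y_{i},y_{j}\), the natural candidate is the walk \(y_{j}\,x\) followed by \(C_{i}-xy_{i}\). It has odd length \(\ell_{i}\le m-2\) and is a genuine path unless \(y_{j}\in V(C_{i})\).

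The main obstacle is exactly the case \(y_{j}\in V(C_{i})\). There \(xy_{j}\) is a chord of \(C_{i}\), which is impossible if \(C_{i}\) is a shortest odd cycle through \(x\) with \(\ell_{i}\ge5\) (the chord would create a shorter odd cycle through \(x\)). The remaining case is \(\ell_{i}=3\), where \(y_{j}\) is the third vertex of the triangle \(xy_{i}y_{j}\). Then \(y_{i}y_{j}\) is itself an edge, hence an odd path of length \(1\). So I expect every pair among \(y_{1},\ldots,y_{m+1}\) to be joined by an odd path of length at most \(m-2\), via a short case analysis on shortest cycles.

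I would therefore take \(z_{i}=y_{i}\) for \(i\in[m+1]\), which uses all \(m+1\) bad neighbours and avoids needing \(x\) at all. Alternatively one can take \(x\) together with \(m\) of them; either choice gives \(m+1\) vertices. I would double-check that minimality of \(C_{i}\) rules out all chord configurations. If it does not, the fallback is to note that any odd closed walk of length at most \(m-2\) through \(y_{i}\) and \(y_{j}\) can be shortened to an odd path of length at most \(m-2\) between them or to an odd cycle through \(x\) shorter than \(C_{i}\).
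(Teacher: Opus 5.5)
Your skeleton is the paper's: take $m+1$ bad neighbours $y_1,\dots,y_{m+1}$ of $x$, show that any two of them are joined by an odd path of length at most $m-2$, apply \cref{lem:upper-codegree} to every pair, and finish with the first two inclusion--exclusion terms. Your count via $(m+1)\cdot 2(m-1)-(2m(m-1)+1)=2m-3$ is correct. So is your variant using $x$ together with $m$ of the $y_i$, which would even give the bound $m-1$.

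The gap is the case $y_j\in V(C_i)$. For $m=5$ this case only arises with triangles and your argument is fine, but for $m\ge 7$ it is handled incorrectly.

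\emph{A shortest odd cycle through a prescribed edge need not be induced.} Take the pentagon $C_i=x\,y_i\,a\,y_j\,b\,x$ with the chord $xy_j$. The chord splits $C_i$ into the even cycle $x\,y_i\,a\,y_j\,x$, which contains $xy_i$, and the triangle $x\,y_j\,b\,x$, which does not. So $C_i$ is a shortest odd cycle through $xy_i$, the edge $xy_j$ is bad, and $xy_j$ is a chord even though $\ell_i=5$. Minimality among odd cycles through the vertex $x$ would exclude this chord, but then $C_i$ need not contain the edge $xy_i$. You cannot impose both conditions.

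\emph{You also miss a non-chord configuration.} Even when $C_i$ is chordless, $y_j$ can be the other neighbour of $x$ on $C_i$, e.g.\ $C_i=x\,y_i\,a\,b\,y_j\,x$. Then $xy_j$ is a cycle edge rather than a chord, and your walk $y_j\,x\,(C_i-xy_i)$ repeats $y_j$.

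So the assertion that $y_j\in V(C_i)$ forces $\ell_i=3$ is false, and your closing fallback is too vague to fill the hole.

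No minimality is needed; the paper's fix is a single parity observation. Take any odd cycle $C_i$ through $xy_i$ of length $\ell_i\le m-2$. If $y_j\in V(C_i)$, the two $y_i$--$y_j$ arcs of $C_i$ have lengths summing to the odd number $\ell_i$, so exactly one arc is odd. Since $\ell_i-1$ is even, that arc has length at most $\ell_i-2\le m-4$. If $y_j\notin V(C_i)$, your path $y_j\,x\,(C_i-xy_i)$ works as written. This supplies the pairwise odd paths, and the rest of your proof goes through unchanged.
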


\begin{proof}[Proof of Lemma~\ref{lem:upper-bad-degree}]
Fix \(u\in V(G)\), and let
\(B_{u}=\{v\in N_{G}(u):uv\text{ is bad}\}\).

\begin{claim}\label{cl:upper-bad-path}
Every two distinct vertices of \(B_{u}\) are joined by an odd
path of length at most \(m-2\).
\end{claim}

\begin{poc}
Let \(x,y\in B_{u}\) be distinct.  Choose an odd cycle \(C\) containing
the edge \(ux\).  Its length is at most \(m-2\), because \(G\) is
\(C_{m}\)-free.  If \(y\notin V(C)\), take the \(x\)--\(u\) path
along \(C\) that avoids the edge \(ux\), and then append the edge
\(uy\).  The resulting \(x\)--\(y\) path is
odd, and has the same length as \(C\).  If \(y\in V(C)\), one
of the two \(x\)--\(y\) paths along \(C\) has odd length, and that
length is at most \(|C|-2\).  This proves the claim.
\end{poc}

Suppose that \(|B_{u}|\ge m+1\), and choose distinct vertices
\(v_{1},\ldots,v_{m+1}\in B_{u}\).  Since
\(\frac{2(m-1)}{2m(m-1)+1}>\frac{2}{2m+1}\),
\cref{lem:upper-codegree} and \cref{cl:upper-bad-path} give
\(|N_{G}(v_{i})\cap N_{G}(v_{j})|<m^{2}+m\) whenever \(i\neq j\).
The first two terms of inclusion--exclusion now give
\[
\left|\bigcup_{i=1}^{m+1}N_{G}(v_{i})\right|\ge\sum_{i=1}^{m+1}d_{G}(v_{i})
-\sum_{1\le i<j\le m+1}|N_{G}(v_{i})\cap N_{G}(v_{j})|>\frac{2(m+1)(m-1)}{2m(m-1)+1}n
-\binom{m+1}{2}(m^{2}+m)\ge n,
\]
a contradiction.  Therefore \(|B_{u}|\le m\).  Since \(u\) was
arbitrary, the lemma follows.
\end{proof}

The next lemma is the structural heart of the proof.  Its conclusion
concerns the chromatic number of the ambient graph \(G\), but its
degree condition concerns the spanning subgraph \(F\). 

\begin{lemma}\label{lem:upper-selector}
Let \(m\ge 5\) be odd and \(F\) be a spanning subgraph of a graph \(G\) on \(n\) vertices.
Suppose that no edge of \(F\) lies in a copy of \(C_{3}\) or \(C_{5}\)
in \(G\).  If
\(
\delta(F)>\frac{2(m-1)}{2m(m-1)+1}(n+1),
\)
then \(\chi(G)\le m-1\).
\end{lemma}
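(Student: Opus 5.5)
The plan is to build an explicit proper colouring out of $F$-neighbourhoods, exploiting two local facts that follow from the hypothesis that no $F$-edge lies in a $C_3$ or $C_5$ of $G$. Write $r=m-1$, which is even, and $\delta=\delta(F)$. The first fact is that for every vertex $x$ the set $N_F(x)$ is independent in $G$: an edge $ab$ with $a,b\in N_F(x)$ would put the $F$-edge $xa$ into a triangle. The second fact is that for every $F$-edge $xy$ we have $N_G(x)\cap N_G(y)=\emptyset$, and there is no $G$-edge between $N_G(x)\setminus\{y\}$ and $N_G(y)\setminus\{x\}$; otherwise the $F$-edge $xy$ would lie in a $C_3$ or a $C_5$. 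The two facts together show that, for an $F$-edge $xy$, the set $N_F(x)\cup N_F(y)$ induces a bipartite graph with bipartition $N_F(x),N_F(y)$, and moreover it is "insulated" from much of its surroundings. The colouring will therefore be organised in $r/2$ pairs of colours, one pair per selected $F$-edge, which matches the even parameter $r=2\lfloor c/2\rfloor$ in \cref{thm:chromatic-profile}(2).

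Next, I would choose $F$-edges $x_1y_1,\dots,x_sy_s$ greedily, with $s\le r/2$. At each step the new edge is chosen to maximise the number of still uncovered vertices lying in $N_F(x_i)\cup N_F(y_i)$. Each such set receives two colours, with $N_F(x_i)$ and $N_F(y_i)$ as the two classes, restricted to vertices not already coloured. The key quantitative claim is that the vertices left uncovered after $r/2$ rounds can be absorbed into the existing classes. The absorption rule is this: an uncovered vertex $z$ has more than $\delta$ $F$-neighbours, and those neighbours form an independent set. I would then show that $z$ has no $G$-neighbour in one of the $r$ classes, using the second fact applied to the $F$-edges at $z$. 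Concretely, if $z$ had $G$-neighbours in both $N_F(x_i)$ and $N_F(y_i)$, then $z$ would sit on a short odd walk through $x_iy_i$. I expect this to force $N_F(z)$ to be essentially disjoint from the $i$-th pair, which would give a lower bound on how much of $N_F(z)$ escapes all of the selected sets.

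The counting that should produce the threshold $\frac{2r}{2r(r+1)+1}$ goes as follows. Suppose some vertex $z$ cannot be absorbed. Then $z$ has a $G$-neighbour in every one of the $r$ classes. I would use $z$, together with one of its $F$-neighbours $w$, as a candidate edge in the greedy step. Maximality of the greedy choices would then give, in each of the $r/2$ rounds, at least roughly $2\delta$ new vertices, minus overlaps. Combined with the $2\delta$ vertices of $N_F(z)\cup N_F(w)$, which are disjoint from the covered part up to small error, this should yield a total of the form $(r(r+1)+\tfrac12)\cdot\frac{\delta}{r}>n+1$, contradicting the degree hypothesis. The term $n+1$ in that hypothesis is presumably there to absorb the $\pm1$ losses coming from excluding $y$ from $N_G(x)$ and similar boundary effects.

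I expect the main obstacle to be controlling the overlaps $N_F(x_i)\cap N_F(x_j)$ and $N_F(x_i)\cap N_F(y_j)$ between different rounds, since the second fact only separates the two sides of a single $F$-edge. My first attempt would be to choose each new edge $x_iy_i$ with $x_i$ at $G$-distance at least $3$ from all earlier selected sets, whenever possible. When this is impossible, I would argue that the earlier pairs already cover the second neighbourhood, and then colour the remaining vertices directly by the first fact.
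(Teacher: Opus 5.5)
There is a genuine gap, and it is in the absorption step, which carries all of the difficulty.

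First, your second local fact is false. A $G$-edge $ab$ with $a\in N_G(x)\setminus\{y\}$ and $b\in N_G(y)\setminus\{x\}$ closes only the $4$-cycle $xaby$ through $xy$, and the hypothesis does not exclude that. For a concrete case, take the balanced blow-up of $C_9$ with parts $V_0,\dots,V_8$, $F=G$ and $m=5$; it satisfies the hypothesis since $\frac29>\frac8{41}$. For $x\in V_0$, $y\in V_1$, take $a\in V_1\setminus\{y\}$ and $b\in V_0\setminus\{x\}$.

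What the hypothesis does give is a closed-walk fact: no vertex $z$ has $G$-neighbours in both $N_F(x)$ and $N_F(y)$. Indeed, $zaxybz$ is a closed walk of length five with at most two non-$F$-edges, so the odd cycle it contains uses an $F$-edge. But this already shows that every uncovered vertex has no $G$-neighbour in at least one class of each pair. So a vertex that ``cannot be absorbed'' never exists, and the contradiction you plan to extract from such a $z$ targets a non-issue.

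The real obstruction is that the whole uncovered set $U$ must be placed simultaneously.
\begin{enumerate}
\item $G[U]$ need not be independent, and putting $z$ into a class removes that class as an option for the neighbours of $z$ in $U$.
\item $U$ is not a boundary effect. Since $\delta(F)$ is only about $n/m$, your $m-1$ classes may cover only about $\frac{m-1}{m}n$ vertices; in the $C_9$ blow-up above, at least $\frac n9$ vertices stay uncovered.
\end{enumerate}
You would therefore have to solve a list-colouring problem on $G[U]$ with no control over $G[U]$. The greedy count ($2\delta$ new vertices per round, total $(r+1+\frac1{2r})\delta$) is read off from the hypothesis rather than derived, and nothing prevents $N_F(z)\cup N_F(w)$ from overlapping the covered part heavily.

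The missing idea is to colour a vertex by where its $F$-neighbours lie, not by membership. The paper takes a maximal family of pairwise disjoint $F$-neighbourhoods $A_i=N_F(s_i)$. By maximality every vertex has an $F$-neighbour in some $A_i$. Two $G$-adjacent vertices $x,y$ cannot both have $F$-neighbours in the same $A_i$, because of the closed walk $xas_ibyx$. So all vertices are coloured at once, and disjointness together with $\delta(F)>\frac{n+1}{m+1}$ limits the number of colours to $m$.

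The substantive part is the reduction from $m$ to $m-1$ colours:
\begin{enumerate}
\item Colours $i,j$ can be merged unless some $G$-edge joins $B_i$ and $B_j$.
\item Pentagon arguments show each vertex of $A_i$ is heavy towards exactly one other $A_j$. This partitions $A_i$ into the sets $Z_{i,j}$, with a unique large part $Z_{i,f(i)}$.
\item $f$ is a permutation. Cycles of length at least three are ruled out because $|X_{f(i)}|<|X_i|$ along them.
\item What remains is a fixed-point-free involution of the odd set $[m]$, which is impossible.
\end{enumerate}
This is where the exact constant enters, via $\delta(F)>2(m-1)(n-m\delta(F)+1)$, and where the oddness of $m$ is used. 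Your plan has no counterpart to this step.
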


\begin{proof}[Proof of Lemma~\ref{lem:upper-selector}]
Suppose for a contradiction that \(\chi(G)\ge m\).  We first record
the observation that allows us to choose several disjoint
\(F\)-neighborhoods.

\begin{claim}\label{cl:upper-shared-neighborhood}
Suppose that \(A_{i}=N_{F}(s_{i})\) for some \(s_{i}\in V(G)\).  If
\(xy\in E(G)\), then there is no index \(i\) for which both
\(N_{F}(x)\cap A_{i}\) and \(N_{F}(y)\cap A_{i}\) are nonempty.
\end{claim}

\begin{poc}
Suppose otherwise, and choose vertices
\(a\in N_{F}(x)\cap A_{i}\) and \(b\in N_{F}(y)\cap A_{i}\).
The sequence \(xas_{i}byx\) is a closed walk of odd length five.
Every odd closed walk contains an odd cycle of no greater length, so
this walk contains a copy of \(C_{3}\) or \(C_{5}\).  All its edges
except possibly \(xy\) are \(F\)-edges.  The resulting odd cycle must
therefore contain an \(F\)-edge, contrary to the hypothesis.
\end{poc}

Choose a maximal pairwise disjoint collection of full
\(F\)-neighborhoods
\(
A_{i}=N_{F}(s_{i})\) with \(i\in[t].\)
By maximality, every vertex \(v\) has an \(F\)-neighbor in at least
one set \(A_{i}\), otherwise \(N_{F}(v)\) could be added to the
collection.  Choose \(c(v)\in[t]\) such that
\(N_{F}(v)\cap A_{c(v)}\neq\varnothing\).  By
\cref{cl:upper-shared-neighborhood}, the map \(c\) is a proper
\(t\)-coloring of \(G\), and hence \(t\ge m\).  On the other hand,
\(\frac{2(m-1)}{2m(m-1)+1}>\frac{1}{m+1}\), so the degree hypothesis
gives \(\delta(F)>\frac{n+1}{m+1}\).  Since the sets \(A_{i}\) are
pairwise disjoint and each has order at least \(\delta(F)\), we have
\(t\delta(F)\le n\), and therefore \(t\le m\).  Consequently,
\(t=m\).

Let \(R=V(G)\setminus\left(\bigcup_{i\in[m]}A_{i}\right)\).  Since
\(|A_{i}|\ge\delta(F)\) for every \(i\), counting all vertices gives
the important identity
\begin{equation}\label{eq:upper-defect}
    n-m\delta(F)
=|R|+\sum_{i\in[m]}\bigl(|A_{i}|-\delta(F)\bigr).
\end{equation}
In particular, \(n-m\delta(F)\ge0\).  Moreover,
\(\frac{2(m-1)}{2m(m-1)+1}>\frac{1}{m+1}\), and hence
\(|R|\le n-m\delta(F)<\delta(F)\).

For the remainder of the proof, write
\(d_{i}(v)=|N_{F}(v)\cap A_{i}|\), and define
\[
B_{i}=\bigg\{v\in V(G):d_{i}(v)>0\text{ and }d_{h}(v)=0
\text{ for every }h\in[m]\setminus\{i\}\bigg\}.
\]

\begin{claim}\label{cl:upper-singleton-crossing}
For every two distinct indices \(i,j\in[m]\), there is an edge of \(G\)
between \(B_{i}\) and \(B_{j}\).  Furthermore, every \(x\in B_{i}\)
satisfies \(d_{i}(x)\ge\delta(F)-|R|\) and
\[
|A_{i}\setminus N_{F}(x)|
\le|A_{i}|-\delta(F)+|R|
\le n-m\delta(F).
\]
\end{claim}

\begin{poc}
Suppose first that there is no \(G\)-edge between \(B_{i}\) and
\(B_{j}\).  Merge the colors \(i,j\) as follows.  Give the merged
color to a vertex if all the sets \(A_{h}\) containing one of its
\(F\)-neighbors have \(h\in\{i,j\}\).  Every other vertex has an
\(F\)-neighbor in some \(A_{h}\) with \(h\notin\{i,j\}\), we then choose one
such \(h\) as its color.  By \cref{cl:upper-shared-neighborhood}, two
adjacent vertices cannot receive the same unmerged color.  If two
adjacent vertices both received the merged color, the nonempty sets
of indices they see would be disjoint subsets of \(\{i,j\}\).  One
vertex would therefore belong to \(B_{i}\) and the other to \(B_{j}\),
contrary to the assumption.  We have constructed a proper
\((m-1)\)-coloring of \(G\), a contradiction.

Now let \(x\in B_{i}\).  Since \(x\) has no \(F\)-neighbor in any
\(A_{j}\) with \(j\neq i\), all its \(F\)-neighbors outside
\(A_{i}\) lie in \(R\).  Hence \(d_{i}(x)\ge\delta(F)-|R|\), and by \eqref{eq:upper-defect}, we have
\[
|A_{i}\setminus N_{F}(x)|
\le|A_{i}|-\delta(F)+|R|
\le n-m\delta(F).
\]
This proves the claim.
\end{poc}

\begin{claim}\label{cl:upper-heavy-unique}
No vertex \(v\in V(G)\) has two distinct indices \(i,j\in[m]\) for
which both \(d_{i}(v)>n-m\delta(F)+1\) and
\(d_{j}(v)>n-m\delta(F)+1\).
\end{claim}

\begin{poc}
Suppose that such \(v,i,j\) exist.  By
\cref{cl:upper-singleton-crossing}, choose an edge
\(xy\in E(G)\) with \(x\in B_{i}\) and \(y\in B_{j}\).  Since all
quantities are integers,
\[
|N_{F}(v)\cap N_{F}(x)\cap A_{i}|
\ge n-m\delta(F)+2+\delta(F)-|R|-|A_{i}|=2+\sum_{h\in[m]\setminus\{i\}}
\bigl(|A_{h}|-\delta(F)\bigr)\ge2,
\]
where the equality follows from \eqref{eq:upper-defect}.
The analogous intersection inside \(A_{j}\) also has at least two
vertices.  We may therefore choose
\(a\in N_{F}(v)\cap N_{F}(x)\cap A_{i}\setminus\{y\}\) and
\(b\in N_{F}(v)\cap N_{F}(y)\cap A_{j}\setminus\{x\}\).
The vertices \(v,x,y\) are distinct: \(v\) has \(F\)-neighbors in
both \(A_{i}\) and \(A_{j}\), whereas \(x\in B_{i}\) and
\(y\in B_{j}\), and \(x\neq y\) because \(xy\) is an edge.  Since
\(a\) is adjacent in \(F\) to both \(v\) and \(x\), it differs from
these two vertices. Similarly, \(b\) differs from \(v\) and \(y\).
The vertices \(a,b\) lie in the disjoint sets \(A_{i},A_{j}\), so
\(a\neq b\).  Finally, we selected \(a\neq y\) and \(b\neq x\).
Thus \(vaxybv\) is a pentagon in \(G\) containing \(F\)-edges,
a contradiction.
\end{poc}

We need one more local consequence of the assumption on \(F\).  It is
stated separately because it will be applied to paths whose middle
edge lies only in the ambient graph \(G\).

\begin{claim}\label{cl:upper-mixed-path}
Suppose that \(z,x,y,w\) are distinct vertices satisfying
\(zx,yw\in E(F)\) and \(xy\in E(G)\).
Then
\[
N_{F}(z)\cap N_{F}(w)=\varnothing.
\]
\end{claim}

\begin{poc}
Suppose that \(t\in N_{F}(z)\cap N_{F}(w)\).  Since the graphs are
loopless, \(t\notin\{z,w\}\).  If \(t=x\), then \(xywx\) is a triangle
containing the \(F\)-edges \(yw,wx\), if \(t=y\), then \(zxyz\) is a
triangle containing the \(F\)-edges \(zx,yz\).  If
\(t\notin\{x,y\}\), then \(zxywtz\) is a simple pentagon containing
edges of \(F\).  Every case
contradicts the hypothesis.
\end{poc}

We now use the degree condition to understand how the sets
\(A_{1},\ldots,A_{m}\) can be joined.  Rearranging the hypothesis of
the lemma gives the inequality
\begin{equation}\label{eq:upper-density}
    \delta(F)>2(m-1)\bigl(n-m\delta(F)+1\bigr).
\end{equation}
For distinct \(i,j\in[m]\), put
\[
Z_{i,j}=
\{v\in A_{i}:d_{j}(v)>n-m\delta(F)+1\}.
\]
Thus \(Z_{i,j}\) consists of the vertices of \(A_{i}\) having many
\(F\)-neighbors in \(A_{j}\).

Every \(A_{i}\) is independent even in \(G\).  Indeed, an edge
inside \(A_{i}=N_{F}(s_{i})\), together with \(s_{i}\), would form a
triangle containing edges of \(F\).  Thus \(d_{i}(v)=0\) for every
\(v\in A_{i}\).

Fix \(i\in[m]\) and \(v\in A_{i}\).  If
\(d_{j}(v)\le n-m\delta(F)+1\) for every
\(j\in[m]\setminus\{i\}\), then
\[
d_{F}(v)
\le|R|+(m-1)\bigl(n-m\delta(F)+1\bigr)
\le m\bigl(n-m\delta(F)+1\bigr)-1
<\delta(F),
\]
a contradiction, in particular, the last inequality follows from
\eqref{eq:upper-density}. Thus every vertex of \(A_{i}\) belongs to
at least one \(Z_{i,j}\).  By \cref{cl:upper-heavy-unique}, it belongs
to exactly one.  Hence the sets \(Z_{i,j}\),
\(j\in[m]\setminus\{i\}\), partition \(A_{i}\).

Call \(Z_{i,j}\) \emph{large} if
\(
|Z_{i,j}|\ge n-m\delta(F)+1.
\)

\begin{claim}\label{cl:upper-large-sets}
For every \(i\in[m]\), there exists \(j\in[m]\setminus\{i\}\) such
that \(Z_{i,j}\) is large.
Moreover, for distinct \(i,j\in[m]\) and
\(h\in[m]\setminus\{i,j\}\), the sets \(Z_{i,h}\) and \(Z_{j,h}\)
cannot both be large.
\end{claim}

\begin{poc}
Fix \(i\in[m]\). If none of the \(m-1\) sets \(Z_{i,j}\) were large,
then, since these sets partition \(A_{i}\) and
\(n-m\delta(F)\) is an integer,
\[
|A_{i}|\le(m-1)\bigl(n-m\delta(F)\bigr)<\delta(F),
\]
contrary to \(|A_{i}|\ge\delta(F)\).  The strict inequality follows
from \eqref{eq:upper-density}.

Suppose now that \(Z_{i,h}\) and \(Z_{j,h}\) are large for distinct
\(i,j\) and some \(h\in[m]\setminus\{i,j\}\). Choose an edge
\(xy\in E(G)\) with \(x\in B_{i}\) and \(y\in B_{j}\), as supplied by
\cref{cl:upper-singleton-crossing}. The vertex \(x\) misses at most
\(n-m\delta(F)\) vertices of \(A_{i}\), whereas
\(|Z_{i,h}|\ge n-m\delta(F)+1\). The analogous statement holds for
\(y\) and \(Z_{j,h}\). We may therefore choose
\(z\in Z_{i,h}\cap N_{F}(x)\) and
\(w\in Z_{j,h}\cap N_{F}(y)\).
The path \(zxyw\) is simple.  Indeed, the three edges of the path give
\(z\neq x\), \(x\neq y\), and \(y\neq w\), while \(z,w\) lie in the
disjoint sets \(A_{i},A_{j}\).  If \(z=y\), then
\(z\in Z_{i,h}\) would give \(d_{h}(y)>0\), contradicting
\(y\in B_{j}\) and \(h\neq j\). The case \(w=x\) is symmetric because
\(h\neq i\). This checks all six pairs among
\(z,x,y,w\).  By \cref{cl:upper-mixed-path},
\(
N_{F}(z)\cap N_{F}(w)=\varnothing.
\)

Since \(z\in A_{i}\), we have \(d_{i}(z)=0\). Moreover,
\(z\in Z_{i,h}\), so \cref{cl:upper-heavy-unique} gives
\(d_{\ell}(z)\le n-m\delta(F)+1\) for every
\(\ell\in[m]\setminus\{i,h\}\). At most \(|R|\) of its
\(F\)-neighbors lie in \(R\), and therefore
\[
d_{h}(z)
\ge\delta(F)-|R|-(m-2)\bigl(n-m\delta(F)+1\bigr).
\]
The same argument gives the corresponding bound for \(d_{h}(w)\).
Their \(F\)-neighborhoods inside \(A_{h}\) are disjoint, and hence
\[
2\left(\delta(F)-|R|-(m-2)\bigl(n-m\delta(F)+1\bigr)\right)
\le|A_{h}|.
\]
Rearranging gives the first inequality below. For the second, note
from \eqref{eq:upper-defect} that
\(2|R|+|A_{h}|-\delta(F)\le2(n-m\delta(F))\). Thus
\[
\begin{aligned}
\delta(F)
&\le2|R|+|A_{h}|-\delta(F)
+2(m-2)\bigl(n-m\delta(F)+1\bigr)\\
&\le2\bigl(n-m\delta(F)\bigr)
+2(m-2)\bigl(n-m\delta(F)+1\bigr)\\
&=2(m-1)\bigl(n-m\delta(F)+1\bigr)-2,
\end{aligned}
\]
contrary to \eqref{eq:upper-density}.
\end{poc}

\begin{claim}\label{cl:upper-permutation}
For every \(i\in[m]\), there is a unique \(f(i)\neq i\) for which
\(Z_{i,f(i)}\) is large, and the resulting map \(f:[m]\to[m]\) is a
permutation.
\end{claim}

\begin{poc}
Every \(i\) has at least one such index by
\cref{cl:upper-large-sets}.  If some \(i\) had two, choose one large
set \(Z_{h,j}\) for every \(h\neq i\).  We would obtain \(m+1\) large
sets with distinct second indices: the two chosen for \(i\) already
have different second indices, while \cref{cl:upper-large-sets}
prevents two different first indices from using the same second
index.  There are only \(m\) possible second indices, a
contradiction.  Thus \(f(i)\) is unique.  The same claim shows that
\(f\) is injective and hence a permutation.  Finally, \(f(i)\neq i\)
because it was chosen from \([m]\setminus\{i\}\).
\end{poc}

It remains to examine the cycle structure of \(f\).  Put
\(
X_{i}=Z_{i,f(i)}\) with \(i\in[m].\)
The other \(m-2\) parts of \(A_{i}\) have order at most
\(n-m\delta(F)\).  Hence, by \eqref{eq:upper-density},
\begin{equation}\label{eq:upper-X-size}
    |X_{i}|\ge|A_{i}|-(m-2)\bigl(n-m\delta(F)\bigr)\ge\delta(F)-(m-2)\bigl(n-m\delta(F)\bigr)>m\bigl(n-m\delta(F)+1\bigr)-1.
\end{equation}

Every \(x\in X_{i}\) is joined by an \(F\)-edge to almost every
vertex of \(X_{f(i)}\).  Indeed, among the sets \(A_{j}\), only
\(A_{f(i)}\) can contain more than \(n-m\delta(F)+1\)
\(F\)-neighbors of \(x\).  Therefore
\[
d_{f(i)}(x)
\ge\delta(F)-|R|-(m-2)\bigl(n-m\delta(F)+1\bigr).
\]
It follows from \eqref{eq:upper-defect} that the number of vertices of
\(A_{f(i)}\), and hence of \(X_{f(i)}\), missed by \(x\) is at most
\begin{equation*}
    |A_{f(i)}\setminus N_{F}(x)|
\le |R|+|A_{f(i)}|-\delta(F)
+(m-2)\bigl(n-m\delta(F)+1\bigr)\le(m-1)\bigl(n-m\delta(F)+1\bigr)-1.
\end{equation*}

\begin{claim}\label{cl:upper-cycle-shortening}
If \(f^{2}(i)\neq i\), then
\(
|X_{f(i)}|<|X_{i}|.
\)
\end{claim}

\begin{poc}
First, notice that every \(x\in X_{i}\) has at least
\[
|X_{f(i)}|-(m-1)\bigl(n-m\delta(F)+1\bigr)+1
\]
\(F\)-neighbors in \(X_{f(i)}\).  On the other hand, if
\(f^{2}(i)\neq i\), then a vertex \(y\in X_{f(i)}\) can have more
than \(n-m\delta(F)+1\) \(F\)-neighbors only in
\(A_{f^{2}(i)}\), not in \(A_{i}\).  Hence \(y\) has at most
\(n-m\delta(F)+1\) \(F\)-neighbors in \(X_{i}\subseteq A_{i}\).
Counting the \(F\)-edges between \(X_{i}\) and \(X_{f(i)}\) from both
sides gives
\[
|X_{i}|
\left(
|X_{f(i)}|-(m-1)\bigl(n-m\delta(F)+1\bigr)+1
\right)
\le\bigl(n-m\delta(F)+1\bigr)|X_{f(i)}|.
\]
Suppose for a contradiction that
\(|X_{f(i)}|\ge|X_{i}|\). Then from the above inequality we can see
\[
|X_{i}|
\left(
1-\frac{(m-1)\bigl(n-m\delta(F)+1\bigr)-1}{|X_{f(i)}|}
\right)
\le n-m\delta(F)+1.
\]
However, the left side is at least
\(
|X_{i}|-\left((m-1)\bigl(n-m\delta(F)+1\bigr)-1\right),
\)
which is strictly larger than \(n-m\delta(F)+1\) by
\eqref{eq:upper-X-size}, a contradiction.
\end{poc}

The permutation \(f\) has no fixed point.  If it had a cycle of length
at least three, then \cref{cl:upper-cycle-shortening}, applied at every
vertex around that cycle, would give a cyclic chain of strict
inequalities between the corresponding set sizes.  This is
impossible.  Hence every cycle of \(f\) has length two.  But a
permutation of the odd set \([m]\) cannot be a disjoint union of
transpositions.  This final contradiction proves
\(\chi(G)\le m-1\).
\end{proof}

With the structural lemma in hand, the upper bound follows by deleting
the bad edges.

\begin{proof}[Proof of the upper bound in Theorem~\ref{thm:main}]
Let \(\eps>0\), and let \(G\) be a \(C_{m}\)-free graph on \(n\)
vertices such that
\[
\delta(G)\ge
\left(\frac{2(m-1)}{2m(m-1)+1}+\eps\right)n.
\]
Take \(n\) sufficiently large that
\cref{lem:upper-bad-degree} applies and
\(
\eps n>
m+\frac{2(m-1)}{2m(m-1)+1}.
\)
Let \(F\) be the spanning subgraph obtained by deleting every bad edge
of \(G\).  By \cref{lem:upper-bad-degree}, every vertex loses at most
\(m\) incident edges. Hence
\[
\delta(F)
\ge\delta(G)-m\ge\left(\frac{2(m-1)}{2m(m-1)+1}+\eps\right)n
-m>\frac{2(m-1)}{2m(m-1)+1}(n+1).
\]
Every edge of \(F\) is good in \(G\).  In particular, no edge of
\(F\) lies in a copy of \(C_{3}\) or \(C_{5}\) in \(G\).  Therefore
\cref{lem:upper-selector} gives
\(\chi(G)\le m-1\), which also yields that \(G\hto K_{m-1}\). Since the target \(K_{m-1}\) has only
\(m-1\) vertices and is therefore \(C_{m}\)-free.  Thus, 
\[
\homthr(C_{m})
\le\frac{2(m-1)}{2m(m-1)+1}.
\]
This finishes the proof.
\end{proof}

\section{The lower bound}\label{sec:lower}
In this section, we prove the lower bound in \cref{thm:main}. 
We begin with listing some tools and known results used in the proof, together with the necessary notation. The odd girth of a graph \(G\) is the length of a shortest odd cycle in \(G\), and is infinite if \(G\) is bipartite.

The first auxiliary result lets us replace a fixed finite graph by an arbitrarily large graph of large odd girth, while preserving the existence of homomorphisms to all bounded targets. It is an immediate consequence of a result of
Ne\v{s}et\v{r}il and Zhu~\cite[Theorem~3.1]{2004JCTBZhu}, after taking sufficiently many
disjoint copies to make the graph large.

\begin{lemma}[\cite{2004JCTBZhu}]\label{lem:sparse}
Let \(H\) be a non-empty finite graph, and let \(m,L,n_{0}\) be positive integers. Then there is a graph \(W\) such that \(|V(W)|\ge n_0\), the odd girth of \(W\) is greater than \(L\), \(W\hto H\), and for every graph \(K\) with at most \(m\) vertices, \(W\hto K\) if and only if \(H\hto K\).
\end{lemma}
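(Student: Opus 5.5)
The plan is to take the finite-size statement directly from Ne\v{s}et\v{r}il and Zhu and then enlarge the graph by taking disjoint copies. Their Theorem~3.1 in~\cite{2004JCTBZhu} says roughly the following. For every finite graph \(H\) and all positive integers \(\ell\) and \(m\), there is a finite graph \(W_{0}\) of girth greater than \(\ell\) with two properties: \(W_{0}\hto H\), and for every graph \(K\) on at most \(m\) vertices, \(W_{0}\hto K\) holds if and only if \(H\hto K\). I will apply it with \(\ell=L\) and the given \(m\). Since girth greater than \(L\) forbids all cycles of length at most \(L\), the odd girth of \(W_{0}\) is also greater than \(L\). Where the source needs \(H\) to have an edge, the hypothesis that \(H\) is non-empty supplies it. Otherwise the degenerate case is trivial anyway, since any edgeless \(W\) maps to any \(K\) with a vertex.

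Next, set \(W\) to be the disjoint union of \(N=\lceil n_{0}/|V(W_{0})|\rceil\) copies of \(W_{0}\), so that \(|V(W)|\ge n_{0}\). I will then check that the three properties pass to \(W\).
\begin{enumerate}
\item[(i)] Every cycle of \(W\) lies inside one copy of \(W_{0}\), so the odd girth of \(W\) equals that of \(W_{0}\) and is greater than \(L\).
\item[(ii)] A homomorphism \(W_{0}\to H\) applied to each copy gives \(W\hto H\).
\item[(iii)] Fix \(K\) with at most \(m\) vertices. If \(H\hto K\), then composing with \(W\hto H\) gives \(W\hto K\). Conversely, if \(W\hto K\), restricting to one copy gives \(W_{0}\hto K\), and the Ne\v{s}et\v{r}il--Zhu property then gives \(H\hto K\).
\end{enumerate}

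The only real point is making sure the cited theorem is used in the form stated above, with the bound on the order of \(K\) and with girth (hence odd girth) as the sparseness measure. I expect this matching to be the only possible obstacle, and it is bookkeeping rather than mathematics. Everything added here is the disjoint-union padding, which preserves odd girth and homomorphism existence in both directions, as checked above.
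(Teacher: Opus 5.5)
Your proposal is correct and matches the paper's argument: the paper also obtains this lemma by applying Ne\v{s}et\v{r}il--Zhu's Theorem~3.1 (girth, hence odd girth, greater than \(L\), together with the homomorphism equivalence for targets on at most \(m\) vertices) and then taking enough disjoint copies to reach order at least \(n_0\). Your check that disjoint copies preserve odd girth and both directions of the homomorphism equivalence is exactly the bookkeeping the paper leaves implicit.
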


For an integer \(q\ge1\) and a graph \(G\), let \(M_q(G)\) be the following generalized Mycielski graph. Its vertices are
\[
        \{v^0,v^1,\ldots,v^q:v\in V(G)\}\cup\{w\}.
\]
Thus \(|M_q(G)|=(q+1)|V(G)|+1.\) Its edges are \(u^0v^0\) for every \(uv\in E(G)\), the edges \(u^iv^{i+1}\) and \(v^iu^{i+1}\) for every \(uv\in E(G)\) and \(0\le i<q\), and the root edges \(wv^q\) for every \(v\in V(G)\). The vertex \(w\) is often called the root. Furthermore, for positive integers \(q_1,\ldots,q_s\), write
\[
        M_{q_s}\cdots M_{q_1}(G)
        =
        M_{q_s}(M_{q_{s-1}}(\cdots M_{q_1}(G)\cdots)).
\]
When all \(q_i\) are equal to \(q\), write \(M_q^s(G)\) for this \(s\)-fold iteration, and put \(M_q^0(G)=G\). The next result says that a tower of generalized Mycielski steps starting from \(K_2\) has relatively large chromatic number. This is the Stiebitz's theorem~\cite{Stiebitz} in general, also see~\cite[Theorem~1.1]{2019EJC}.

\begin{lemma}[\cite{Stiebitz}]\label{lem:stiebitz}
For every sequence of positive integers \(q_1,\ldots,q_s\),
\[
        \chi(M_{q_s}\cdots M_{q_1}(K_2))\ge s+2.
\]
In particular, \(\chi(M_q^s(K_2))\ge s+2\) for all \(q\ge1\) and \(s\ge0\).
\end{lemma}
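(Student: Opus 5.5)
We argue by induction on \(s\), proving the general form for an arbitrary sequence \(q_1,\ldots,q_s\); the ``in particular'' clause is then the special case \(q_i=q\). The base case \(s=0\) is \(\chi(K_2)=2\).

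For the inductive step it suffices to prove the following one-step statement.

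\emph{Key claim.} For every graph \(G\) and every \(q\ge1\), we have \(\chi(M_q(G))\ge\chi(G)+1\).

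One cannot hope to prove this for arbitrary \(G\). For \(q\ge2\) the one-step claim is false in general: generalized Mycielskians of graphs with short odd cycles need not increase the chromatic number. The induction must therefore carry a stronger invariant than \(\chi\). This is exactly why Stiebitz's theorem is proved topologically, and it is the main obstacle.

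The plan follows Stiebitz (as presented in \cite{2019EJC}). Attach to each graph a simplicial complex, such as the neighborhood complex \(\mathcal N(G)\) or the box complex \(B(G)\), carrying its natural free \(\mathbb Z_2\)-action. By Lov\'asz's bound, \(\chi(G)\ge\operatorname{coind}(B(G))+2\), or equivalently \(\chi(G)\ge \operatorname{conn}(\mathcal N(G))+3\).

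The inductive invariant is a topological one. The core lemma to establish is that \(B(M_q(G))\) is \(\mathbb Z_2\)-homotopy equivalent to, or at least admits a \(\mathbb Z_2\)-map from, the suspension \(\operatorname{susp}(B(G))\). Granting this, the \(\mathbb Z_2\)-coindex rises by one at each step, regardless of \(q\).

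To prove the core lemma, model \(M_q(G)\) as a quotient of \(G\times P_q\), where \(P_q\) is a path with a loop at level \(0\), with all top-level vertices identified to the root \(w\). The levels \(0,\ldots,q\) then realize a cylinder \(B(G)\times[0,1]\). The root \(w\), together with its neighborhood \(\{v^q\}\), cones off one end. The antipodal copy cones off the other end, which gives the suspension.

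Starting from \(\operatorname{coind} B(K_2)=0\), since \(B(K_2)\simeq S^0\), after \(s\) steps we obtain \(\operatorname{coind} B(M_{q_s}\cdots M_{q_1}(K_2))\ge s\). Lov\'asz's bound then gives \(\chi\ge s+2\).

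Verifying the equivariant suspension map for general \(q\) is the hard step. For the level-to-level edges \(u^iv^{i+1}\), the simplices of the box complex must be shown to assemble into the product-with-interval correctly, and the root must supply the cone points without collapsing the antipodal structure. Here we would simply cite Stiebitz's theorem \cite{Stiebitz} and \cite[Theorem~1.1]{2019EJC} rather than re-derive it, as the paper does.
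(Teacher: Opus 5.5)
Your proposal is correct and matches the paper: the paper also gives no proof of this lemma and simply cites Stiebitz's theorem \cite{Stiebitz} (see also \cite[Theorem~1.1]{2019EJC}), and your sketch is the standard argument behind that citation. That argument is that each generalized Mycielski step raises a topological invariant by one (the connectivity of the neighborhood complex, or the equivariant coindex of the box complex via an antipodal map from its suspension), and Lov\'asz's bound applied from $K_2$ then gives the result. You are also right that the induction cannot be run on $\chi$ alone, since for $q\ge 2$ a generalized Mycielski step need not increase the chromatic number.
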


For a graph \(F\) and vertices \(u,v\in V(F)\), we write
\(\dist_F(u,v)\) for the length of a shortest path in \(F\) from \(u\) to \(v\),
where the length of a path is its number of edges. If no
path from \(u\) to \(v\) exists, we put \(\dist_F(u,v)=\infty\). In particular,
\(\dist_F(u,u)=0\). For \(j\in V(F)\) and an integer \(r\ge0\), define
\[
        B_F(j,r)
        :=
        F\big[\{u\in V(F):\dist_F(u,j)\le r\}\big].
\]
We call \(B_F(j,r)\) the ball of radius \(r\) around \(j\) in \(F\).
We shall use the following theorem of Corsini, Picasarri-Arrieta,
Pierron, Pirot and Robinson~\cite[Theorem~7.1]{CorsiniEtAl}.

\begin{lemma}[{\cite{CorsiniEtAl}}]\label{lem:ball-coloring}
Let \(L\ge2\), and let \(F\) be a graph containing no cycle of length
\(L+1\). Then, for every \(j\in V(F)\),
\[
        \chi\big(B_F(j,\floor{L/2})\big)\le 2L .
\]
\end{lemma}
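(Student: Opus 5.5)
The plan is to combine a BFS-layering argument with a Bondy–Simonovits/Verstraëte-type path lemma. Let \(r=\floor{L/2}\), and let \(D_0=\{j\},D_1,\ldots,D_r\) be the distance layers of \(B:=B_F(j,r)\). Fix a BFS tree \(T\) rooted at \(j\). Every edge of \(B\) either lies inside one layer \(D_i\) or joins \(D_i\) to \(D_{i+1}\). Suppose each \(F[D_i]\) has a proper colouring \(c_i\) with at most \(L\) colours. Then the colour \((i \bmod 2,\,c_i(v))\) for \(v\in D_i\) is a proper colouring of \(B\) with at most \(2L\) colours. Indeed, edges between consecutive layers see different parities, and edges inside a layer see different second coordinates. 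So the whole task reduces to proving
\[
\chi(F[D_i])\le L\qquad\text{for every } 0\le i\le r .
\]

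Suppose instead that \(\chi(F[D_i])\ge L+1\) for some \(i\). Then \(F[D_i]\) contains a connected subgraph \(H\) with \(\delta(H)\ge L\) (a \((L+1)\)-critical subgraph). Note that \(i\ge1\). Consider the vertices of \(T\) that are ancestors of all of \(V(H)\), and let \(w\) be the deepest one, at depth \(d<i\). By the choice of \(w\), the vertices of \(H\) lie in at least two subtrees of \(T\) hanging from distinct children of \(w\). Colour a vertex of \(H\) red if it lies in one fixed such child subtree, and blue otherwise. Both classes are nonempty and \(H\) is connected.

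Now take any path \(Q\) in \(H\) of length \(\ell\) whose endpoints \(a,b\) have different colours. The tree paths from \(a\) and from \(b\) up to \(w\) have length \(i-d\) each. They are internally disjoint, since they pass through different children of \(w\). They meet \(Q\) only at \(a\) and \(b\), since they use layers \(D_{d},\ldots,D_{i-1}\) apart from their endpoints. Hence together with \(Q\) they form a cycle of length \(\ell+2(i-d)\). Set \(\ell=L+1-2(i-d)\). Because \(i\le \floor{L/2}\), we have
\[
1\le L+1-2i\le \ell\le L-1 .
\]
So it suffices to find, for this particular \(\ell\in[1,L-1]\), a path of length exactly \(\ell\) in \(H\) with differently coloured endpoints. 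That would produce a \(C_{L+1}\) in \(F\), a contradiction.

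The main obstacle is this last step. I would invoke, or reprove, the standard lemma: if \(H\) is connected with \(\delta(H)\ge k\) and \(V(H)\) is coloured red/blue with both colours present, then for every \(1\le \ell\le k\) there is a path of length \(\ell\) whose endpoints have different colours. The proof I would try is an extremal argument. Take an edge \(ab\) with differently coloured ends, which exists by connectivity. Then grow a longest path \(P\) starting at \(a\), and use the \(\ge k\) neighbours of its terminal vertex on \(P\), in the Pósa rotation style, to produce endpoints at every distance \(1,\ldots,k\) along \(P\). Colour changes along \(P\) then force a bichromatic pair at each prescribed length. The delicate point is guaranteeing the exact length \(\ell\) together with the colour condition rather than just some length. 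If the direct argument stalls, I would fall back on applying the lemma inside the block structure of \(H\), using that \(\delta(H)\ge L\) is generous compared with \(\ell\le L-1\).
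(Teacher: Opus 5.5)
The paper does not prove this lemma; it imports it from Corsini et al. Your reduction is sound. The parity-times-layer colouring, the deepest common ancestor $w$, the cycle of length $\ell+2(i-d)$ and the range $1\le \ell\le L-1$ are all correct. The gap is the final step: the lemma you propose to invoke is false as stated. Take $K_{k,k}$ with $k\ge 2$, coloured red/blue along its bipartition. Every path of even length has equally coloured ends, so there is no bichromatic path of length $2\le k$. Your longest-path/rotation sketch uses only connectivity, $\delta(H)\ge k$ and the presence of both colours, so it cannot succeed. The exact-length-plus-colour issue you flag as delicate is precisely where it breaks.

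The missing ingredient is non-bipartiteness, which you get for free from $\chi(H)=L+1\ge 3$. Suppose every vertex of $H$ saw only one colour among its neighbours. If $u$ sees only the opposite colour, then each neighbour $u'$ of $u$ has the neighbour $u$ of opposite colour, so $u'$ also sees only the opposite colour. This propagates along edges. Starting from a bichromatic edge, which exists by connectivity, every edge of $H$ would be bichromatic. Then $H$ would be bipartite with the red/blue bipartition, which is impossible. Hence some vertex $v$ has a red neighbour $x$ and a blue neighbour $y$.

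Since $\delta(H-\{x,y\})\ge L-2\ge \ell-1$, you can greedily build a path $v=r_0r_1\cdots r_{\ell-1}$ in $H-\{x,y\}$. At each step $r_t$ with $t\le \ell-2\le L-3$ has a neighbour off the path. Then $x r_0\cdots r_{\ell-1}$ and $y r_0\cdots r_{\ell-1}$ are both paths of length exactly $\ell$ ending at $r_{\ell-1}$, and one of them has differently coloured ends. This uses exactly the slack $\ell\le L-1<L\le \delta(H)$ and needs no P\'osa rotations. With this replacement your argument is complete.
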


For given integer \(k\ge 3,\) let \(m=2k-1\) and \(q=k-2\). For graphs \(H_{1},H_{2}\), we write \(H_{1}\nhto H_{2}\) if there is no graph homomorphism from \(H_{1}\) to \(H_{2}\). The following proposition is the reduction from finite obstructions to lower bound for homomorphism threshold. More precisely, it says that any finite graph \(T\) of odd girth \(m\), if it cannot map to a \(C_m\)-free graph, can be turned into dense \(C_m\)-free graphs with no bounded \(C_m\)-free homomorphic image.

\begin{prop}\label{prop:machine}
Let \(m\ge5\) be odd. Suppose that \(T\) is a non-empty finite graph whose odd girth is \(m\), and suppose \(T\nhto F\) for every \(C_m\)-free graph \(F\). Then \(\homthr(C_m)\ge\frac{1}{2|V(T)|}\). Moreover, for every pair of positive integers \(M,N_0\), there exists a \(C_m\)-free graph \(G\) with \(|V(G)|\ge N_0\), \(\delta(G)\ge \frac{|V(G)|}{2|V(T)|}\), and no homomorphism to any \(C_m\)-free graph on at most \(M\) vertices.
\end{prop}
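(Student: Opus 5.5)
The plan is a blow-up construction in the spirit of the classical Andr\'asfai/Łuczak-type lower bounds, using \cref{lem:sparse} to supply a large sparse ``witness'' graph that is glued to a blow-up of \(T\).

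\textbf{Step 1: the witness graph \(W\).} Given \(M\), apply \cref{lem:sparse} with \(H=T\), the parameter \(m\) there replaced by \(M\), \(L\) large (at least \(m\), say \(L=|V(T)|+m\)), and \(n_0\) large. This yields \(W\) with large odd girth and \(W\hto T\); fix such a homomorphism \(\phi\colon W\to T\). Moreover, for every graph \(K\) on at most \(M\) vertices, \(W\hto K\) if and only if \(T\hto K\). By hypothesis \(T\) maps to no \(C_m\)-free graph, so \(W\) maps to no \(C_m\)-free graph on at most \(M\) vertices.

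\textbf{Step 2: densify.} Let \(N\) be huge compared with \(|V(W)|\). Build \(G\) on \(V(W)\cup\bigcup_{t\in V(T)}(\{t\}\times[N])\), i.e. \(W\) together with an \(N\)-blow-up \(T^{(N)}\) of \(T\) (two classes \(t,t'\) complete bipartite when \(tt'\in E(T)\)). Join each \(x\in V(W)\) completely to the classes of all \(T\)-neighbors of \(\phi(x)\). Thus \(G\) retracts onto the blow-up, which itself maps to \(T\), via \(x\mapsto\phi(x)\). Since \(W\subseteq G\), any homomorphism of \(G\) to a \(C_m\)-free graph on at most \(M\) vertices would restrict to one of \(W\), which is impossible.

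\textbf{Step 3: minimum degree.} A vertex of \(T^{(N)}\) in class \(t\) has degree at least \(N\deg_T(t)\ge N\), since \(T\) is non-empty and we may assume it has no isolated vertices (delete them, which keeps both hypotheses and only decreases \(|V(T)|\)). Each \(x\in W\) also has at least \(N\) neighbors. With \(|V(G)|=N|V(T)|+|V(W)|\le 2N|V(T)|\) for \(N\ge|V(W)|/|V(T)|\), we get \(\delta(G)\ge|V(G)|/(2|V(T)|)\), and \(|V(G)|\ge N_0\).

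\textbf{Step 4: \(C_m\)-freeness, the main obstacle.} Any odd cycle in \(G\) maps under the retraction to a closed walk of the same length in \(T\), so every odd cycle of \(G\) has length at least \(m\). The danger is an \(m\)-cycle, since \(T\) itself has odd girth exactly \(m\) and \(T^{(N)}\) contains \(T\), so \(C_m\subseteq T^{(N)}\). This shows the naive gluing fails: the blow-up must not contain \(T\). I would therefore replace \(T^{(N)}\) by a blow-up of a bipartite or high-odd-girth image. Concretely, use the bipartite double cover \(T\times K_2\), blown up, and keep \(W\) mapped into it through \(\phi\) composed with a choice of side. Since \(W\) has odd girth greater than \(L\), cycles through \(W\)-vertices must use many \(W\)-edges or traverse the blow-up, and parity plus the large odd girth of \(W\) should force any odd cycle through \(W\) to have length greater than \(m\). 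The precise bookkeeping here is the crux: an odd cycle must use an odd number of \(W\)-edges whose \(\phi\)-images close up a walk in \(T\), and one needs length either \(<m\), which is impossible, or \(>m\). I expect the denominator \(2|V(T)|\) to come exactly from the doubling \(T\times K_2\), with \(2|V(T)|\) classes. Finally, letting \(M\to\infty\) gives \(\homthr(C_m)\ge\frac{1}{2|V(T)|}\).
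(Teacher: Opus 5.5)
Your Step 1 and the restriction-to-$W$ argument in Step 2 agree with the paper. Step 4, however, is a genuine gap, and more than bookkeeping: attaching $W$ to a dense graph lying \emph{over} $T$, or over $T\times K_2$, is the wrong mechanism. In your densification, every vertex adjacent to $x\in V(W)$ sits over a $T$-neighbor of $\phi(x)$. So a detour through the dense part projects to a length-preserving walk in $T$, and such walks can retrace an $m$-cycle of $T$.

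The double cover does not repair this. Take $M\ge 2$. Since $T\nhto K_2$, the lemma gives $W\nhto K_2$, so $W$ is not bipartite. Hence every side choice $\sigma\colon V(W)\to\{0,1\}$ has an edge $xy$ with $\sigma(x)=\sigma(y)=:\sigma_0$. Here $x$ is joined to the classes $(t',1-\sigma(x))$ for $t'\in N_T(\phi(x))$. Suppose $\phi(x)\phi(y)$ is the edge $c_0c_1$ of some $m$-cycle $c_0c_1\cdots c_{m-1}$ of $T$; nothing in your construction rules this out. Then the closed walk $x\to(c_{m-1},1-\sigma_0)\to(c_{m-2},\sigma_0)\to\cdots\to(c_2,1-\sigma_0)\to y\to x$, using one vertex from each blown-up class, is a copy of $C_m$ in $G$. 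It has length $1+(m-3)+1+1=m$, and the sides match up exactly because $m-3$ is even. This cycle uses a single $W$-edge, so enlarging the odd girth of $W$ cannot help.

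The missing idea is to make the added dense part \emph{shorten} projections rather than preserve them. The paper proceeds as follows.
\begin{itemize}
\item It pads the fibers $R_i=\phi^{-1}(t_i)$ with isolated vertices to a common size $r$.
\item For each $i$, it adds a fresh independent set $X_i$ of size $r$, joined completely to $R_i$ and to nothing else.
\end{itemize}
Then $|V(G)|=2|V(T)|r$ and $\delta(G)\ge r$. The factor $2$ comes from pairing each fiber with its $X_i$, not from a double cover.

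A new vertex $x\in X_i$ on a cycle has both cycle-neighbors in $R_i$. After projecting to $T$, the detour through $x$ becomes a backtrack at $t_i$ and can be removed, shortening the walk by $2$. An $m$-cycle of $G$ must contain $a\ge 1$ new vertices, since $W$ has odd girth greater than $m$. It contains at most $\frac{m-1}{2}$ of them, since $\bigcup_i X_i$ is independent. So it yields an odd closed walk in $T$ of length $m-2a$ with $1\le m-2a\le m-2$, contradicting odd girth $m$. Since $W$ is still a subgraph of $G$, your Step 2 restriction argument then finishes the proof unchanged.
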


\begin{proof}[Proof of Proposition~\ref{prop:machine}]
Let \(N=|V(T)|\). Apply \cref{lem:sparse} with \(H=T\), target order parameter \(M\), \(n_{0}=N_{0}\) and \(L=m\). We get a graph \(W_0\) with \(|V(W_0)|\ge N_0\), odd girth greater than \(m\), and a homomorphism \(\psi:W_0\hto T\). Moreover, \(W_0\) and \(T\) have the same homomorphisms to every graph on at most \(M\) vertices.

Write \(V(T)=\{t_1,\ldots,t_N\}\), and put \(R_i^0=\psi^{-1}(t_i)\). Each \(R_i^0\) forms an independent set, since otherwise an edge inside \(R_i^0\) would map to a loop at \(t_i\). Let \(r:=\max_i |R_i^0|\). Then we add appropriate isolated vertices to the smaller fibers so that all fibers have size \(r\). Call the new graph \(W\), and call the new fibers \(R_1,\ldots,R_N\). Also notice that, the map \(\psi\) still extends to a homomorphism \(W\hto T\), and adding isolated vertices creates no cycle.

Now add pairwise disjoint independent sets \(X_1,\ldots,X_N\), each of size \(r\), disjoint from \(V(W)\). Make \(X_i\) complete to \(R_i\), and add no other edges. Let the resulting graph be \(G\). Then \(|V(G)|=2Nr\). Moreover, every vertex in \(X_i\) has degree \(r\), because every vertex in \(R_i\) sees all vertices of \(X_i\), and the added vertices in the fibers, although isolated in \(W\), have degree \(r\) in \(G\). Hence \(\delta(G)\ge r=\frac{|V(G)|}{2N}\).

\begin{claim}\label{cl:machine-free}
The graph \(G\) is \(C_m\)-free.
\end{claim}

\begin{poc}
Suppose not, and let \(C\) be an \(m\)-cycle in \(G\). Since the odd girth of \(W\) is greater than \(m\), the cycle \(C\) cannot lie completely inside \(W\). Thus the cycle \(C\) contains at least one vertex from \(X:=X_1\cup\cdots\cup X_N\). Let \(a=|V(C)\cap X|\). Notice that \(X\) is an independent set, so no two of these \(a\) vertices are consecutive on \(C\). Hence \(1\le a\le\frac{m-1}{2}\).

Delete these \(a\) vertices from \(C\). What remains is a collection of paths whose edges all lie in \(W\). Project the remaining vertices to \(T\) using \(W\hto T\). If a deleted vertex \(x\in X_i\) had neighbors \(u,v\) on \(C\), then \(u,v\in R_i\), because \(x\) is adjacent only to \(R_i\). Thus both \(u\) and \(v\) project to the same vertex \(t_i\). In other words, after the projection to \(T\), the two ends created by deleting \(x\) can be glued back together at \(t_i\). Observe that doing this for every deleted vertex gives a closed walk in \(T\) of length \(m-2a\).

The length \(m-2a\) is positive, odd, and smaller than \(m\). Since our graphs are loopless, an odd closed walk of length \(1\) is impossible. If its length is at least \(3\), then it contains an odd cycle of length at most \(m-2a\), a contradiction to the fact that the odd girth of \(T\) is \(m\). Therefore \(G\) is \(C_m\)-free.
\end{poc}

\begin{claim}\label{cl:machine-no-target}
There is no \(C_m\)-free graph \(F\) on at most \(M\) vertices with \(G\hto F\).
\end{claim}

\begin{poc}
Suppose that such a graph \(F\) exists. Since \(W_0\) is an induced subgraph of \(G\), the homomorphism \(G\hto F\) restricts to a homomorphism \(W_0\hto F\). By the choice of \(W_0\) from \cref{lem:sparse}, the existence of a homomorphism \(W_0\hto F\) implies the existence of a homomorphism \(T\hto F\). This contradicts the assumption on \(T\), since \(F\) is \(C_m\)-free.
\end{poc}

The threshold statement now follows directly. The construction shows that for every \(M\) and \(N_0\) there is a \(C_m\)-free graph \(G\) with \(|V(G)|\ge N_0\), \(\delta(G)\ge \frac{1}{2N}\cdot |V(G)|\), and no homomorphism to any \(C_m\)-free graph on at most \(M\) vertices. Therefore \(\homthr(C_m)\ge \frac{1}{2N}\). This finishes the proof.
\end{proof}

It remains to build the finite obstruction required by \cref{prop:machine}. This is where the generalized Mycielski construction enters.

\begin{prop}\label{prop:Tk}
For every \(k\ge3\), there is a graph \(T_k\) whose odd girth is \(2k-1\),
\[
        |V(T_k)|=(k-1)^{4k-5}(2k-1)+\frac{(k-1)^{4k-5}-1}{k-2},
\]
such that \(T_k\nhto F\) for every \(C_{2k-1}\)-free graph \(F\).
\end{prop}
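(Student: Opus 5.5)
The plan is to take \(T_k\) to be an iterated generalized Mycielski graph of the cycle \(C_{2k-1}\), namely
\[
T_k:=M_q^{\,4k-5}(C_{m}),\qquad q=k-2,\ m=2k-1 .
\]
Since \(|M_q(G)|=(q+1)|V(G)|+1=(k-1)|V(G)|+1\), iterating \(s\) times from \(|V(C_m)|=2k-1\) gives \((k-1)^s(2k-1)+\sum_{j<s}(k-1)^j=(k-1)^s(2k-1)+\frac{(k-1)^s-1}{k-2}\). With \(s=4k-5\) this is exactly the stated order. Three properties then need checking: the odd girth of \(T_k\) is \(m\), \(\chi(T_k)\ge 4k-3\), and any homomorphic image of \(T_k\) in a \(C_m\)-free graph has chromatic number at most \(4k-4\).

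\emph{Odd girth.} I will show that if \(G\) has odd girth at least \(m\) and no isolated vertices, then \(M_q(G)\) also has odd girth at least \(m\) and contains \(C_m\); then induct on the number of iterations. The copy on level \(0\) reproduces \(G\), so \(C_m\subseteq M_q(G)\).

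Now take an odd closed walk. If it avoids the root \(w\), the projection \(v^i\mapsto v\) is a homomorphism \(M_q(G)-w\to G\), so the walk projects to an odd closed walk in \(G\) of the same length, which has length at least \(m\). If the walk passes through \(w\), assign level \(q+1\) to \(w\) and level \(i\) to \(v^i\). Every edge other than a level-\(0\) edge \(u^0v^0\) changes the level by exactly one. So if the walk uses no level-\(0\) edge, its length is even. Otherwise it must descend from level \(q+1\) to level \(0\) and climb back, giving length at least \(2(q+1)+1=2q+3=m\). Isolated vertices never appear, since \(w\) is adjacent to the top level and every \(v^i\) inherits neighbours from \(G\).

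\emph{Chromatic number.} I will use the functoriality of \(M_q\): a homomorphism \(G\to H\) induces \(M_q(G)\to M_q(H)\) via \(v^i\mapsto\phi(v)^i\) and \(w\mapsto w\). Since \(K_2\hto C_m\), iterating gives \(M_q^{s}(K_2)\hto T_k\). Then \cref{lem:stiebitz} yields \(\chi(T_k)\ge s+2=4k-3\).

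\emph{No map to a \(C_m\)-free graph.} Suppose \(\phi:T_k\to F\) with \(F\) being \(C_m\)-free, and let \(w\) be the root of the outermost Mycielski step. Every vertex of \(T_k=M_q(G')\) is within distance \(q+1=k-1\) of \(w\): from \(v^i\), alternate through \(u^{i+1},v^{i+2},\dots\) using a neighbour \(u\) of \(v\) in \(G'\) until level \(q\) is reached, then take one edge to \(w\). Homomorphisms do not increase distances, so \(\phi(T_k)\subseteq B_F(\phi(w),k-1)\).

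Apply \cref{lem:ball-coloring} with \(L=m-1=2k-2\ge2\). Then \(F\) has no cycle of length \(L+1=m\) and \(\floor{L/2}=k-1\), so this ball has chromatic number at most \(2L=4k-4\). Composing \(\phi\) with such a colouring gives a proper \((4k-4)\)-colouring of \(T_k\), contradicting \(\chi(T_k)\ge 4k-3\).

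The step needing the most care is the odd-girth verification, specifically the level-parity argument. The exponent \(4k-5\) is chosen exactly so that Stiebitz's bound \(s+2\) beats the ball bound \(4k-4\) by one.
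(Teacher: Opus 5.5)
Your proof is correct. It uses the same graph $T_k=M_q^{4k-5}(C_m)$, essentially the same odd-girth argument, and the same two cited lemmas, but the non-homomorphism step takes a different and shorter route.

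The paper proves the exact equivalence of Claim~\ref{cl:local-graph}: $M_q(G)\hto F$ if and only if $G\hto A_{q,j}(F)$ for some $j$. It then colors the auxiliary tuple graph $A_{q,j}(F)$ by $c(a_0)$, after checking that $\dist_F(a_0,j)\le q+1$. Finally it compares this with $\chi(M_q^{s-1}(C_m))=\chi(M_q^{s}(K_2))\ge s+2$.

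You instead note that all of $T_k$ lies within distance $k-1$ of its outermost root; this is exactly where your no-isolated-vertices remark is needed. Hence every homomorphism into $F$ lands in one ball $B_F(\phi(w),k-1)$, and Lemma~\ref{lem:ball-coloring} bounds $\chi(T_k)$ itself. Your lower bound $\chi(T_k)\ge 4k-3$ then follows from functoriality of $M_q$ applied to $K_2\hto C_m$.

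This avoids the auxiliary graph altogether, and it is in fact slightly stronger. Since $C_m\cong M_q(K_2)$, you have $T_k\cong M_q^{s+1}(K_2)$, so Lemma~\ref{lem:stiebitz} gives $\chi(T_k)\ge s+3$. Your argument would therefore already work with $s=4k-6$, which reduces $|V(T_k)|$ by a factor of roughly $k-1$.

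The paper's reduction only bounds the chromatic number of the inner graph $M_q^{s-1}(C_m)$. It thereby gives up the color contributed by the outermost Mycielski step, which is why it needs $s=4k-5$. What it buys in exchange is an exact reformulation of $M_q(G)\hto F$ that could in principle be paired with sharper colorings of $A_{q,j}(F)$. As used here, though, it yields nothing beyond your direct argument.
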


\begin{proof}[Proof of Proposition~\ref{prop:Tk}]
Put \(q=k-2\), \(h=k-1\), and \(s=4k-5\). We first record three simple facts about the graph \(M_q(G)\). Recall that \(M_q(G)\) has vertices \(v^0,v^1,\ldots,v^q\) for each \(v\in V(G)\), together with one extra vertex \(w\). Its edges are \(u^0v^0\) for every \(uv\in E(G)\), the edges \(u^iv^{i+1}\) and \(v^iu^{i+1}\) for every \(uv\in E(G)\) and every \(0\le i<q\), and the edges \(wv^q\) for every \(v\in V(G)\).

\begin{claim}\label{cl:mycielski-odd-girth}
If the odd girth of \(G\) is at least \(2k-1\), then the odd girth of \(M_q(G)\) is at least \(2k-1\). Moreover, if \(G\) contains a copy of \(C_{2k-1}\), then the odd girth of \(M_q(G)\) is exactly \(2k-1\).
\end{claim}

\begin{poc}
Let \(C\) be an odd cycle in \(M_q(G)\). We prove that \(|C|\ge2k-1\). First suppose that \(C\) does not contain \(w\). Replace each vertex \(v^i\) on \(C\) by \(v\). Every edge of \(C\) then becomes an edge of \(G\), because every edge of \(M_q(G)-w\) comes from an edge of \(G\). Thus \(C\) gives an odd closed walk in \(G\) of length \(|C|\). Every odd closed walk contains an odd cycle of length at most its own length. Since the odd girth of \(G\) is at least \(2k-1\), we get \(|C|\ge2k-1\).

Now suppose that \(C\) contains \(w\). The two neighbors of \(w\) on \(C\) have the form \(x^q\) and \(y^q\). After deleting \(w\), the remaining part of \(C\) is a path \(P\) from \(x^q\) to \(y^q\) in \(M_q(G)-w\). Since \(|C|\) is odd, \(|P|\) is also odd. Look at the superscripts along \(P\). Each edge of \(M_q(G)-w\) either has the form \(u^0v^0\), or changes the superscript by exactly \(1\). There is no edge \(u^iv^i\) with \(i>0\). If \(P\) used no edge of the form \(u^0v^0\), then the superscript would start at \(q\), end at \(q\), and change by \(1\) at every step, forcing \(|P|\) to be even. Hence \(P\) must use an edge \(u^0v^0\). To reach such an edge from superscript \(q\) takes at least \(q\) steps, the edge \(u^0v^0\) contributes one step, and returning to superscript \(q\) takes at least \(q\) more steps. Hence \(|P|\ge2q+1\), and therefore \(|C|=|P|+2\ge2q+3=2k-1\).

Thus the odd girth of \(M_q(G)\) is at least \(2k-1\). If \(G\) contains a copy of \(C_{2k-1}\), then the vertices \(v^0\) contain a copy of \(G\) inside \(M_q(G)\), so \(M_q(G)\) also contains a copy of \(C_{2k-1}\). Hence the odd girth is exactly \(2k-1\).
\end{poc}

Since \(M_q(K_2)\cong C_{2q+3}=C_{2k-1}\), define \(T_k=M_q^s(C_{2k-1})\). By \cref{cl:mycielski-odd-girth}, applied \(s\) times, the odd girth of \(T_k\) is exactly \(2k-1\). Indeed, every time we apply \(M_q\), the old graph remains inside the new graph as the copy formed by the vertices with superscript \(0\), so the old copy of \(C_{2k-1}\) is never lost.

We next define an auxiliary graph. Let \(F\) be a graph and let \(j\in V(F)\). Define \(A_{q,j}(F)\) as follows. Its vertices are all ordered tuples \((a_0,\ldots,a_q)\in V(F)^{q+1}\) such that \(a_q\in N_F(j)\). Two such tuples \(\boldsymbol a=(a_0,\ldots,a_q)\) and \(\boldsymbol b=(b_0,\ldots,b_q)\) are adjacent if \(a_0b_0\in E(F)\) and, for every \(0\le i<q\), both \(a_ib_{i+1}\in E(F)\) and \(b_ia_{i+1}\in E(F)\).

\begin{claim}\label{cl:local-graph}
For all graphs \(G,F\), \(M_q(G)\hto F\) if and only if there exists \(j\in V(F)\) such that \(G\hto A_{q,j}(F)\).
\end{claim}

\begin{poc}
Suppose first that \(\varphi:M_q(G)\hto F\). Let \(j=\varphi(w)\). For each \(x\in V(G)\), define \(f(x)=(\varphi(x^0),\ldots,\varphi(x^q))\). Since \(wx^q\in E(M_q(G))\), we have \(\varphi(x^q)\in N_F(j)\). Thus \(f(x)\) is a vertex of \(A_{q,j}(F)\). Now take an edge \(xy\in E(G)\). The edge \(x^0y^0\) gives \(\varphi(x^0)\varphi(y^0)\in E(F)\). For every \(0\le i<q\), the edges \(x^iy^{i+1}\) and \(y^ix^{i+1}\) give \(\varphi(x^i)\varphi(y^{i+1})\in E(F)\) and \(\varphi(y^i)\varphi(x^{i+1})\in E(F)\). These are exactly the conditions saying that \(f(x)\) and \(f(y)\) are adjacent in \(A_{q,j}(F)\). Hence \(f:G\hto A_{q,j}(F)\).

Conversely, suppose that \(f:G\hto A_{q,j}(F)\). For each \(x\in V(G)\), write \(f(x)=(a_0(x),\ldots,a_q(x))\). Define a map \(\Phi:V(M_q(G))\to V(F)\) by sending \(w\) to \(j\) and sending \(x^i\) to \(a_i(x)\). We check that \(\Phi\) preserves every edge of \(M_q(G)\). Since \(a_q(x)\in N_F(j)\), the edge \(wx^q\) is sent to an edge of \(F\). If \(xy\in E(G)\), then \(f(x)\) and \(f(y)\) are adjacent in \(A_{q,j}(F)\). Therefore \(a_0(x)a_0(y)\in E(F)\), and, for every \(0\le i<q\), both \(a_i(x)a_{i+1}(y)\in E(F)\) and \(a_i(y)a_{i+1}(x)\in E(F)\). Hence the edges \(x^0y^0\), \(x^iy^{i+1}\), and \(y^ix^{i+1}\) are all sent to edges of \(F\). Thus \(\Phi:M_q(G)\hto F\).
\end{poc}

\begin{claim}\label{cl:A-color}
If \(F\) is \(C_{2k-1}\)-free, then \(\chi(A_{q,j}(F))\le4k-4\) for every \(j\in V(F)\).
\end{claim}

\begin{poc}
Apply \cref{lem:ball-coloring} with \(L=2k-2\). Since \(\floor{L/2}=k-1=q+1\), the ball \(B_F(j,q+1)\) has a proper \((4k-4)\)-coloring. Fix such a coloring and call it \(c\).

We first show that if a tuple \(\boldsymbol a=(a_0,\ldots,a_q)\) has a neighbor in \(A_{q,j}(F)\), then \(\dist_F(a_0,j)\le q+1\). Let \(\boldsymbol b=(b_0,\ldots,b_q)\) be a neighbor of \(\boldsymbol a\). Define \(z_i=a_i\) when \(i\) is even, and define \(z_i=b_i\) when \(i\) is odd. From the definition of adjacency in \(A_{q,j}(F)\), the sequence \(z_0z_1\cdots z_q\) is a walk in \(F\). Its last vertex is either \(a_q\) or \(b_q\), and both \(a_q\) and \(b_q\) are adjacent to \(j\). Hence there is a walk from \(a_0\) to \(j\) of length at most \(q+1\), so \(\dist_F(a_0,j)\le q+1\).

Now color every tuple \((a_0,\ldots,a_q)\) which has a neighbor in \(A_{q,j}(F)\) by the color \(c(a_0)\). Tuples with no neighbor may be colored arbitrarily. If two tuples \((a_0,\ldots,a_q)\) and \((b_0,\ldots,b_q)\) are adjacent in \(A_{q,j}(F)\), then both \(a_0\) and \(b_0\) lie in \(B_F(j,q+1)\), and \(a_0b_0\in E(F)\). Since \(c\) is a proper coloring of \(B_F(j,q+1)\), we have \(c(a_0)\ne c(b_0)\). Thus this is a proper coloring of \(A_{q,j}(F)\), and \(\chi(A_{q,j}(F))\le4k-4\).
\end{poc}

We now prove that \(T_k\) cannot map to any \(C_{2k-1}\)-free graph. Suppose for a contradiction that \(T_k\hto F\), where \(F\) is \(C_{2k-1}\)-free. Since \(T_k=M_q(M_q^{s-1}(C_{2k-1}))\), \cref{cl:local-graph} gives a vertex \(j\in V(F)\) such that \(M_q^{s-1}(C_{2k-1})\hto A_{q,j}(F)\). But \(C_{2k-1}\cong M_q(K_2)\), so \(M_q^{s-1}(C_{2k-1})\cong M_q^s(K_2)\). By \cref{lem:stiebitz}, we have \(\chi(M_q^s(K_2))\ge s+2=4k-3\). On the other hand, \cref{cl:A-color} gives \(\chi(A_{q,j}(F))\le4k-4\). This is impossible, because a homomorphism \(X\hto Y\) implies \(\chi(X)\le\chi(Y)\). Applying this to \(M_q^s(K_2)\hto A_{q,j}(F)\) gives \(4k-3\le\chi(M_q^s(K_2))\le\chi(A_{q,j}(F))\le4k-4\), a contradiction. Therefore \(T_k\nhto F\) for every \(C_{2k-1}\)-free graph \(F\).

It remains to count the vertices. If \(G\) has \(n\) vertices, then \(M_q(G)\) has \((q+1)n+1=hn+1\) vertices. Starting from \(C_{2k-1}\), which has \(2k-1\) vertices, and applying \(M_q\) exactly \(s\) times, we get
\[
        |V(T_k)|=h^s(2k-1)+(1+h+\cdots+h^{s-1}).
\]
Since \(h=k-1\) and \(s=4k-5\), this gives
\[
        |V(T_k)|=(k-1)^{4k-5}(2k-1)+\frac{(k-1)^{4k-5}-1}{k-2}.
\]
This completes the proof.
\end{proof}

\begin{proof}[Proof of the lower bound in Theorem~\ref{thm:main}]
Apply \cref{prop:machine} with \(m=2k-1\) and \(T=T_k\), where \(T_k\) is given by \cref{prop:Tk}. The count in \cref{prop:Tk} gives
\[
        \homthr(C_{2k-1})\ge
        \frac{1}{2\left((k-1)^{4k-5}(2k-1)+\frac{(k-1)^{4k-5}-1}{k-2}\right)}.
\]
This proves the theorem.
\end{proof}
\begin{rmk}\label{rmk:lower-consequences}
We remark that there are two further consequences of the preceding construction.
First observe that the graph produced in the proof of
\cref{prop:machine} actually has odd girth greater than \(m\). Indeed,
if it contains an odd cycle \(C\) of length \(\ell\le m\), let \(a\)
be the number of vertices of \(C\) lying in the newly added
independent sets \(X_{i}\). Since the sparse core has odd girth greater
than \(m\), we have \(a\ge1\), while independence gives
\(a\le\frac{\ell-1}{2}\). Deleting these vertices, projecting the
remaining paths to \(T\), and gluing the two ends corresponding to
each deleted vertex gives an odd closed walk in \(T\) of positive
length \(\ell-2a<m\), contradicting the odd girth of \(T\).

This strengthening proves the lower bound in
\cref{cor:asymmetric-threshold}. Fix \(k\ge3\), put \(m=2k-1\), and
take \(T=T_{k}\) from \cref{prop:Tk}. For every pair of positive
integers \(M,N_{0}\), the construction gives an
\(\mathcal C_{m}\)-free graph \(G\) of order at least \(N_{0}\) with
\(\delta(G)\ge\frac{|V(G)|}{2|V(T_{k})|}\). If \(G\hto F\) for a
\(C_{m}\)-free graph \(F\) of order at most \(M\), then restricting
to the sparse core and applying \cref{lem:sparse} would give
\(T_{k}\hto F\), contrary to \cref{prop:Tk}. Since both \(M\) and
\(N_{0}\) are arbitrary, the definition of the asymmetric
homomorphism threshold gives the required lower bound.

The same construction with a shorter Mycielski tower gives
\cref{thm:chromatic-profile}(3). For \(c\ge2\), put \(q=k-2\) and
\(T_{k,c}=M_{q}^{c-2}(C_{m})\). Then \(T_{k,c}\) has odd girth \(m\).
Moreover, \(C_{m}\cong M_{q}(K_{2})\), so
\(T_{k,c}\cong M_{q}^{c-1}(K_{2})\) and
\(\chi(T_{k,c})\ge c+1\) by \cref{lem:stiebitz}. Its order is
\[
|V(T_{k,c})|
=(k-1)^{c-2}(2k-1)
+\frac{(k-1)^{c-2}-1}{k-2}
=N_{k,c}.
\]
For every \(N_{0}\), repeat the construction in
\cref{prop:machine}, applying \cref{lem:sparse} with
\(H=T_{k,c}\), target-size parameter \(c\), \(L=m\), and
\(n_{0}=N_{0}\). The resulting graph \(G\) has odd girth greater than
\(m\), satisfies
\(\delta(G)\ge\frac{|V(G)|}{2N_{k,c}}\), and is not \(c\)-colorable:
otherwise \(G\hto K_{c}\), and restricting to the sparse core would
give \(T_{k,c}\hto K_{c}\). Since \(N_{0}\) is arbitrary,
\(
\delta_{\chi}(C_{m},c)
\ge\delta_{\chi}(\mathcal C_{m},c)
\ge\frac{1}{2N_{k,c}},
\)
as claimed.
\end{rmk}

\section{Some remarks and related consequences}
The central result of this paper is that, for every \(k\ge3\), the
homomorphism threshold of \(C_{2k-1}\) is strictly smaller than
\(\frac{1}{2k-1}\). In view of the considerable evidence that had
pointed towards equality~\cite{2020CombOdd,2025Blowup,2019JGTC3C5}, this is a qualitative change in our
understanding of the problem rather than merely a numerical
improvement. On the lower bound side, we give a new, entirely
graph-theoretic construction establishing a positive quantitative
lower bound.  Taken together, these results suggest that determining
the homomorphism thresholds of odd cycles may be substantially more
difficult than previously expected.

Our upper bound argument gives
\[
\homthr(C_{2k-1})
\le\frac{4(k-1)}{4(k-1)(2k-1)+1}
=\frac{1}{2k-1}-\eps_{k},
\]
where
\(
\eps_{k}
=\frac{1}{(2k-1)\bigl(4(k-1)(2k-1)+1\bigr)}.
\)
We do not expect this correction term to be optimal.  It seems
plausible that a further development of the structural ideas in our
upper bound proof will produce a substantially larger value of
\(\eps_{k}\).  At the same time, our results make it difficult to
formulate a convincing conjecture for the exact value of
\(\homthr(C_{2k-1})\). The known upper bound already implies that
\(\homthr(C_{2k-1})\) tends to zero as \(k\) tends to infinity.  What
remains unclear is its correct scale: we cannot presently decide
whether it remains of order \(\frac{1}{2k-1}\), or whether it is
asymptotically smaller by an unbounded factor.  We suspect the latter.

\begin{conjecture}\label{conj:asymptotic-threshold}
\(
\lim_{k\to\infty}(2k-1)\homthr(C_{2k-1})=0.
\)
\end{conjecture}
We next record a structural consequence of the upper bound proof of Theorem~\ref{thm:main}. Call
an edge of \(G\) short-odd if it lies in an odd cycle of length at most \(m\), and let \(B_{m}(G)\) be the spanning subgraph formed by
all short-odd edges.

\begin{cor}\label{cor:bounded-degree-deletion}
Let \(m\ge 5\) be an odd integer. For every \(\eps>0\) and every sufficiently large \(C_{m}\)-free
\(n\)-vertex graph \(G\) satisfying
\(
\delta(G)\ge(\frac{2(m-1)}{2m(m-1)+1}+\eps)n,
\)
we have \(\Delta(B_{m}(G))\le m\). Consequently, the spanning graph
\(F=G-E(B_{m}(G))\) has odd girth greater than \(m\) and satisfies
\(\delta(F)\ge\delta(G)-m\) and
\(e(B_{m}(G))\le\frac{mn}{2}\).
Moreover, \(E(B_{m}(G))\) is the union of at most \(m+1\) matchings.
\end{cor}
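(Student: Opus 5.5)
The corollary is a repackaging of \cref{lem:upper-bad-degree}, with elementary consequences added. First, take \(n\) large enough that both hypotheses of \cref{lem:upper-bad-degree} hold: \(n\ge 2m^{2}(m+1)\) and \(\frac{2m-3}{2m(m-1)+1}n\ge\binom{m+1}{2}(m^{2}+m)\). Since \(\delta(G)\ge\frac{2(m-1)}{2m(m-1)+1}n\), that lemma applies and says every vertex meets at most \(m\) bad edges. Bad edges are exactly the short-odd edges, so this is precisely \(\Delta(B_{m}(G))\le m\). The degree bound \(\delta(F)\ge\delta(G)-m\) follows at once, since each vertex loses at most \(m\) incident edges. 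Summing degrees in \(B_{m}(G)\) gives \(2e(B_{m}(G))\le mn\).

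For the matching decomposition, apply Vizing's theorem to \(B_{m}(G)\). It gives a proper edge colouring with at most \(\Delta(B_{m}(G))+1\le m+1\) colours, and each colour class is a matching.

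The only step needing care is that \(F\) has odd girth greater than \(m\). Note that deleting bad edges could in principle leave an odd cycle in \(F\) of length at most \(m\). But every edge of such a cycle lies, in \(G\), on an odd cycle of length at most \(m\), namely that cycle itself, since \(F\subseteq G\). So all its edges are bad and were deleted, a contradiction. Hence \(F\) has no odd cycle of length at most \(m\).

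The main obstacle, namely bounding the number of bad edges at each vertex, is already handled by \cref{lem:upper-codegree} and \cref{lem:upper-bad-degree}. What remains is checking that "sufficiently large" covers their numerical hypotheses; the \(\eps\) slack is not even needed for this part.
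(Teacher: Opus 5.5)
Your proof is correct and follows the paper's argument essentially verbatim: you invoke \cref{lem:upper-bad-degree} for large \(n\), noting that the bad edges there are exactly the short-odd edges, and then read off the degree bound, the handshake bound, the odd-girth statement, and the Vizing decomposition. Your remark that the \(\eps\) slack is not needed here is also accurate, since the lemma only requires \(\delta(G)\ge\frac{2(m-1)}{2m(m-1)+1}n\).
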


\begin{proof}[Proof of Corollary~\ref{cor:bounded-degree-deletion}]
For sufficiently large \(n\), the hypotheses of
\cref{lem:upper-bad-degree} hold. The bad edges in that lemma are
exactly the edges of \(B_{m}(G)\), so
\(\Delta(B_{m}(G))\le m\). Deleting this graph removes at most \(m\)
edges at every vertex, and hence
\(\delta(F)\ge\delta(G)-m\). Every odd cycle of length at most \(m\)
in \(F\) would consist entirely of short-odd edges of \(G\), all of
which were deleted. Thus the odd girth of \(F\) is greater than \(m\).
The bound on \(e(B_{m}(G))\) follows from the handshaking lemma.
Finally, Vizing's theorem gives
\(\chi'(B_{m}(G))\le\Delta(B_{m}(G))+1\le m+1\), which is precisely
the asserted decomposition into matchings.
\end{proof}

Our new construction has some other consequences. For example, although the main results of this paper concern connected
forbidden graphs, the same construction also produces disconnected
graphs for which the chromatic and homomorphism thresholds differ.
If \(t\ge1\), write \(tK_{1}\) for the graph consisting of \(t\)
isolated vertices.  We consider
\(D_{m,t}=C_{m}\sqcup tK_{1}\), the disjoint union of an odd cycle and
\(t\) isolated vertices.

\begin{prop}\label{prop:cycle-plus-isolates}
Let \(m=2k-1\ge5\), let \(t\ge1\), put \(h=k-1\), and let \(s=\max\{2m-3,m+t-3\}\). Then \(\chithr(D_{m,t})=0<\homthr(D_{m,t})\), and
\[
        \homthr(D_{m,t})\ge
        \frac{1}{2\left(h^s m+\frac{h^s-1}{h-1}\right)}.
\]
\end{prop}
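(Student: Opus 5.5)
The plan is to reduce everything to the odd cycle case via one observation. A graph on at least \(m+t\) vertices contains \(D_{m,t}\) as a subgraph if and only if it contains \(C_m\): a copy of \(C_m\), together with any \(t\) further vertices, is a copy of \(D_{m,t}\). Hence for \(n\ge m+t\) the \(D_{m,t}\)-free graphs are exactly the \(C_m\)-free graphs. In particular \(\chithr(D_{m,t})=\chithr(C_m)=0\) by Thomassen's theorem~\cite{2007CombC5}. On the target side, a finite graph \(F\) is \(D_{m,t}\)-free if and only if either \(F\) is \(C_m\)-free or \(|V(F)|\le m+t-1\). So for the lower bound I need a finite obstruction \(T\) of odd girth \(m\) that maps neither to any \(C_m\)-free graph nor to any graph on at most \(m+t-1\) vertices.

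I take \(q=k-2\) and \(T=M_q^{s}(C_m)\) with \(s=\max\{2m-3,m+t-3\}\). The odd girth of \(T\) is exactly \(m\), by \cref{cl:mycielski-odd-girth} applied \(s\) times, as in the proof of \cref{prop:Tk}. Its order is \(h^s m+\frac{h^s-1}{h-1}\) by the same count as there. Since \(T\cong M_q^{s+1}(K_2)\), \cref{lem:stiebitz} gives \(\chi(T)\ge s+3\ge m+t\). Hence \(T\) has no homomorphism to any graph with at most \(m+t-1\) vertices.

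For \(C_m\)-free targets I repeat the argument of \cref{prop:Tk} verbatim. Suppose \(T\hto F\) with \(F\) \(C_m\)-free. Then \cref{cl:local-graph} gives some \(j\in V(F)\) with \(M_q^{s}(K_2)\cong M_q^{s-1}(C_m)\hto A_{q,j}(F)\). The left side has chromatic number at least \(s+2\ge 2m-1=4k-3\) by \cref{lem:stiebitz}. The right side has chromatic number at most \(4k-4\) by \cref{cl:A-color}, which is a contradiction. The point is that the argument there used only \(s\ge 4k-5=2m-3\), which our choice of \(s\) guarantees.

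Finally I run the construction of \cref{prop:machine} with this \(T\). Given \(M\) and \(N_0\), it produces a \(C_m\)-free graph \(G\) with \(|V(G)|\ge N_0\) and \(\delta(G)\ge |V(G)|/(2|V(T)|)\). Since \(G\) is \(C_m\)-free, it is \(D_{m,t}\)-free. Now suppose \(G\hto F\) with \(F\) a \(D_{m,t}\)-free graph on at most \(M\) vertices. Restricting to the sparse core \(W_0\) and using \cref{lem:sparse} (applied with target-size parameter \(M\)) yields \(T\hto F\). This is impossible in both cases of the dichotomy above. The only step needing care, and the one I would check most carefully, is that \cref{cl:machine-no-target} must be rerun for the wider class of \(D_{m,t}\)-free targets rather than \(C_m\)-free ones. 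The proof of that claim only uses \(T\nhto F\), so it goes through once both cases are handled. Letting \(M,N_0\) be arbitrary gives the stated lower bound, and in particular \(\homthr(D_{m,t})>0\).
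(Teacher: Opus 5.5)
Your proof is correct and follows the paper's argument essentially step for step: you use the same obstruction \(T=M_q^s(C_m)\), apply \cref{lem:stiebitz} twice (once through \cref{cl:local-graph} and \cref{cl:A-color} for \(C_m\)-free targets, once via \(\chi(T)\ge s+3\ge m+t\) for targets on at most \(m+t-1\) vertices), and rerun \cref{prop:machine}, and your upfront dichotomy for \(D_{m,t}\)-free targets is exactly what the paper derives inline. The only difference is in proving \(\chithr(D_{m,t})=0\): you use Thomassen's theorem together with the observation that \(D_{m,t}\)-free and \(C_m\)-free graphs coincide once there are at least \(m+t\) vertices, while the paper cites the general classification of Allen et al.; both are valid, and yours is more self-contained.
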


\begin{proof}[Proof of Proposition~\ref{prop:cycle-plus-isolates}]
The equality \(\chithr(D_{m,t})=0\) follows from the classification of chromatic thresholds~\cite{2013Adv} since \(D_{m,t}\) is near-acyclic and has chromatic number \(3.\)

We now prove the lower bound for \(\homthr(D_{m,t})\). Put \(q=k-2\) and \(T=M_q^s(C_m)\). The same argument as in the proof of \cref{prop:Tk} shows that the odd girth of \(T\) is \(m\). It also shows that \(T\nhto F\) for every \(C_m\)-free graph \(F\), otherwise we have \(M_q^{s-1}(C_m)\hto A_{q,j}(F)\) for some \(j\), while \(M_q^{s-1}(C_m)=M_q^s(K_2)\) has chromatic number at least \(s+2\) by Lemma~\ref{lem:stiebitz} and \(\chi(A_{q,j}(F))\le2m-2\) by \cref{cl:A-color}, which contradicts \(s\ge2m-3\). We shall also use that \(\chi(T)\ge s+3\ge m+t\), since \(T=M_q^{s+1}(K_2)\).

Apply the construction from \cref{prop:machine} with this graph \(T\). For every pair of positive integers \(M,N_0\), it gives a \(C_m\)-free graph \(G\), hence a \(D_{m,t}\)-free graph, with \(|V(G)|\ge N_0\) and \(\delta(G)\ge \frac{|V(G)|}{2|V(T)|}\). It remains to see that \(G\) has no homomorphism to any \(D_{m,t}\)-free graph on at most \(M\) vertices. Suppose otherwise, and let \(G\hto F\), where \(F\) is \(D_{m,t}\)-free and \(|V(F)|\le M\). As in \cref{prop:machine}, we have \(T\hto F\) and \(F\) contains a copy of \(C_m\). Since \(F\) is \(D_{m,t}\)-free, there are at most \(t-1\) vertices outside this cycle, otherwise the cycle together with any \(t\) outside vertices would contain \(C_m\sqcup tK_1\). Thus \(|V(F)|\le m+t-1\), so \(\chi(F)\le m+t-1\), contradicting \(T\hto F\) and \(\chi(T)\ge m+t\).

Finally, \(|V(T)|=h^s m+(1+h+\cdots+h^{s-1})=h^s m+\frac{h^s-1}{h-1}\). The same threshold argument as in \cref{prop:machine} gives the stated bound.
\end{proof}

\section*{Acknowledgment}
Zixiang Xu would like to thank Dr. Mingyuan Rong for many inspiring discussions on constructions for the problems of blowup thresholds and homomorphism thresholds over the past two years.

\bibliographystyle{abbrv}
\bibliography{HomoC2k-1}

\end{document}